\documentclass[12pt,reqno]{amsart}

\usepackage[left=2.5cm,top=2.5cm,right=2.5cm,bottom=2.5cm]{geometry}
\usepackage{latexsym,amssymb}

\newtheorem{theorem}{Theorem}
\newtheorem{lemma}[theorem]{Lemma}
\newtheorem{corollary}[theorem]{Corollary}

\DeclareMathOperator{\diam}{diam}

\DeclareMathOperator{\capt}{capt}

%For making comments at the time of writing

\begin{document}

\title{Topological directions in Cops and Robbers}
\author{Anthony Bonato}
\address{Department of Mathematics\\
Ryerson University\\
Toronto, ON, Canada} \email{abonato@ryerson.ca}
\author{Bojan Mohar}
\address{Department of Mathematics\\
Simon Fraser University\\
Burnaby, BC, Canada} \email{mohar@sfu.ca}

\keywords{graphs, graphs on surfaces, genus, Cops and Robbers, cop number, planar graphs, toroidal graphs}
\thanks{The authors gratefully acknowledge support from NSERC. The second author is also supported in part by the Canada Research Chairs program, and by the Research Grant P1-0297 of ARRS (Slovenia).}

\subjclass[2010]{05C10,05C57}

\begin{abstract}
We present the first survey of its kind focusing exclusively on results at the intersection of topological graph theory and the game of Cops and Robbers, focusing on results, conjectures, and open problems for the cop
number of a graph embedded on a surface. After a discussion on results for planar graphs, we consider graphs of higher genus. In 2001, Schroeder conjectured that if a graph has genus $g,$ then its
cop number is at most $g + 3.$ While Schroeder's bound is known to hold for planar and toroidal graphs, the case for graphs with higher genus remains open. We consider the capture time of graphs on
surfaces and examine results for embeddings of graphs on non-orientable surfaces. We present a conjecture by the second author, and in addition, we survey results for the lazy cop number, directed
graphs, and Zombies and Survivors.
\end{abstract}

\maketitle

\section{Introduction}

Topological graph theory is a well-developed and active field, and one of its main questions concerns properties of graphs embedded on surfaces.  Graph embeddings correspond to drawings of graphs on
a given surface, where two edges may only intersect in a common vertex. The topic of planar graphs naturally fits within the broader context of graph embeddings on surfaces. See the book \cite{MT} for
further background on the topic. Graph searching is a rapidly emerging area of study within graph theory, and one of the most prominent directions there is in the study of the game of Cops and
Robbers and its variants. The cop number is the main graph parameter in this area, and its consideration has lead to a number of open problems, especially Meyniel's conjecture on the maximum
asymptotic order of the cop number of a connected graph. See the book \cite{bonato} for an introduction to Cops and Robbers and cop number and \cite{bonato1} for background on Meyniel's conjecture.

The present survey considers the intersection of topological graph theory and Cops and Robbers, with the aim of cataloguing results and highlighting major open problems. While the interaction of the
two areas is only now gaining more prominence, its study traces back to the 1984 paper of Aigner and Fromme \cite{af}, which proved that the cop number of a connected planar graph is at most three.
Since that early work, there have been several works on the cop number of planar graphs, outerplanar graphs, and graphs on surfaces (both orientable and non-orientable ones).

\subsection{Topological considerations}
The reader is directed to \cite{MT} for further background. We define a \emph{surface} as a connected compact 2-dimensional manifold with boundary. In particular, \emph{closed surfaces} (surfaces whose boundary is empty) are connected Hausdorff topological spaces in which every point has a neighborhood that is homeomorphic to the plane $\mathbb{R}^2$.  Closed surfaces are classified by their
genus and orientability type. A surface is \emph{orientable} if it is possible to assign a local sense of clockwise and anticlockwise so that along any path between any two points in the surface the
local sense is consistent; otherwise, it is \emph{non-orientable}. Equivalently, a surface is \emph{non-orientable} if it contains a subset that is homeomorphic to the M\"obius band. The
\emph{surface classification theorem} states that any closed surface is homeomorphic to exactly one of the following surfaces: a sphere, a finite connected sum of tori, or a sphere with a finite
number of disjoint discs removed and with cross-caps (that is, M\"obius bands) glued in their place. The number of tori in the orientable case or cross-caps in the non-orientable case equals the
\emph{genus} of the surface. For example, the sphere has genus 0 and the torus genus 1, the projective plane has genus $1$, while the Klein bottle has genus $2.$

The (non-orientable) genus of a graph $G$ is the minimum $g$ such that $G$ is embeddable in a surface of (non-orientable) genus $g$. We write $\gamma(G)$ for the genus of $G$, and
$\widetilde{\gamma}(G)$ for the non-orientable genus of $G$. If $g=0$, then we say the graph is \emph{planar}. Planar graphs possess a rich corpus within graph theory; for example, the famous
$4$-Color Theorem states that the chromatic number of any planar graph is at most 4. Graphs with genus 1 are referred to as \emph{toroidal}.

We note the connection with surfaces and graph minors. A graph $H$ formed from $G$ by first taking a subgraph (that is, removing some vertices and edges from $G$) and then contracting some of the
remaining edges is said to be a \emph{minor} of $G$. Kuratowksi's theorem says that a graph is planar if and only if it has neither $K_5$ nor $K_{3,3}$ as a minor. Generalizing this result, Robertson
and Seymour proved in \cite{GM8} that for each surface $S$, there is a finite set ${\mathcal F}(S)$ of graphs such that any graph $G$ embeds in $S$ if and only if it has no minor in ${\mathcal
F}(S)$.

\subsection{Cops and Robbers} We next turn to Cops and Robbers, which is a game of perfect information (that is, each player is aware of all the moves of the other player) played on graphs.
For a given graph $G$, there are two players, with one player controlling a set of \emph{cops}, and the second controlling a single \emph{robber}. The game is played over a sequence of discrete
time-steps; a \emph{round} of the game is a move by the cops together with the subsequent move by the robber. The cops and robber occupy vertices of $G$, and when a player is ready to move in a round
the robber or each of the cops may move to a neighboring vertex or stay where they are. The cops move first, followed by the robber; thereafter, the players move on alternate steps. A cop or the
robber can \emph{pass}; that is, stay at their current vertex. Note that the players can occupy any vertex of the graph in their initial move. Observe that any subset of cops may move in a given round. The cops win if after a finite number of rounds, one of them can occupy the
same vertex as the robber. This is called a \emph{capture}. The robber wins if they can evade capture indefinitely. Note that the initial placement of the cops will not affect the outcome of the
game, as the cops can expend finitely many moves to occupy a particular initial placement (the initial placement of the cops may, however, affect the length of the game).

Note that if a cop is placed at each vertex, then the cops are guaranteed to win. Therefore, the minimum number of cops required to win in a graph $G$ is a well-defined positive integer, named the
\emph{cop number} of the graph $G.$  The notation $c(G)$ is used for the cop number of a graph $G$. If $c(G)=k,$ then $G$ is $k$-\emph{cop-win}. In the special case $k=1,$ $G$ is said to be a \emph{cop-win} graph.

For familiar examples, the cop number of any tree is 1 and the Petersen graph is $3$-cop-win. In a graph $G$, a set of vertices $S$ is \emph{dominating} if every vertex of $G$ not in $S$ is adjacent
to some vertex in $S$. The \emph{domination number} of a graph $G$ is the minimum cardinality of a dominating set in $G$. Observe that $c(G)$ is upper bounded by the domination number of $G$;
however, this bound is far from tight. For example, trees are cop-win but can have domination number that is linear in their order (as is the case of paths).

\subsection{Organization} The survey is organized as follows. In Section~\ref{secp}, we describe results and questions on the cop number and capture time
for planar and outerplanar graphs. Schroeder's conjecture is presented in Section~\ref{secg}, where we examine the cop number in higher genus. The cop number in the non-orientable case is also
considered. In Section~\ref{seclow}, we discuss lower bounds on the cop number in terms of the genus. Theorem~\ref{mn} is presented for the first time in published form, and we discuss Mohar's
conjecture. In Section~\ref{secm}, we pull together various topics in Cops and Robbers played on surfaces, all with the common theme of bounds on the cop number. In particular, we examine the game of
Lazy Cops and Robbers, Zombies and Survivors, and the cop number of directed graphs. We finish with a summary of the conjectures and open problems presented in the survey.

All graphs we consider are simple, finite, and undirected unless otherwise stated. As the cop number is additive on components, we will assume throughout that \emph{all graphs are connected}. For
background on graph theory, see \cite{west}.

\section{Planar and outerplanar graphs}\label{secp}

Aigner and Fromme~\cite{af} first introduced the cop number, although the game with one cop only was considered earlier in \cite{nw,q}. They proved an influential result on the cop number of
planar graphs.

\begin{theorem}[\cite{af}]\label{theorem:planar}
If $G$ is a planar graph, then $c(G)\leq 3$.
\end{theorem}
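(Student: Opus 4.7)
The plan is to prove Theorem~\ref{theorem:planar} by exhibiting an explicit strategy for three cops, based on the classical idea that a single cop can guard a shortest path. First I would establish the following key lemma (the \emph{isometric path lemma}): if $P$ is a shortest path in $G$ between two vertices $u$ and $v$, then one cop can play so that after finitely many rounds she occupies a vertex of $P$ and thereafter, whenever the robber moves to a vertex of $P$, the cop is adjacent to (or on) that vertex and captures the robber on her next move. The strategy is to shadow the robber's \emph{projection} onto $P$, i.e.\ the index $i$ minimizing $d(r,v_i)$. Because $P$ is isometric, this projection can shift by at most one per robber move, so the cop can match it after finitely many rounds and then maintain the invariant. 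We will say that such a cop \emph{guards} $P$.

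Next I would use this lemma together with planarity to design a shrinking-region strategy for three cops. Fix a planar embedding of $G$. Cop~$C_1$ guards an arbitrary isometric path $P_1$; once $C_1$ is in shadowing position, the robber is confined to (the closure of) one connected component of the plane minus $P_1$, inducing a subgraph $H_1$ whose boundary lies on $P_1$. Then in $H_1$, I would choose a second isometric path $P_2$ whose endpoints both lie on $P_1$, so that together with a subarc of $P_1$ it forms a closed curve in the embedding and thereby splits $H_1$ into two planar subregions. Cop~$C_2$ guards $P_2$ (computed in $H_1$), which further restricts the robber to the smaller of the two subregions, call it $H_2$, bounded by a concatenation of a piece of $P_1$ and a piece of $P_2$.

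Now for the inductive shrinking: the robber is trapped in a planar region $H_k$ whose boundary is a cycle made of (at most two) arcs of guarded paths. The free third cop $C_3$ selects an isometric path $Q$ in $H_k$ whose endpoints lie on the boundary, and guards $Q$; this cuts $H_k$ into two strictly smaller planar subregions, and after finitely many rounds the robber is confined to just one of them, $H_{k+1}$. At this point one of the previously guarded arcs of $\partial H_k$ no longer bounds the robber's region, so the corresponding cop is freed and rotates into the role of $C_3$ for the next iteration, while the other two cops continue to guard the two arcs of $\partial H_{k+1}$. Because each $H_{k+1}$ has strictly fewer vertices than $H_k$, the process terminates in capture after finitely many rounds.

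The main obstacle will be formalizing the shrinking step: one must verify that a suitable isometric path $Q$ in $H_k$ exists with endpoints on the current boundary, that a cop guarding $Q$ within the subgraph $H_k$ still effectively prevents the robber (who is already confined to $H_k$ by the other two cops) from crossing $Q$ in $G$, and that the rotation of the cops' roles is well-defined so that exactly two cops are on the boundary of $H_{k+1}$ at all times. All three points rely essentially on planarity, via the Jordan curve theorem applied to the closed curve formed by the guarded paths in the fixed embedding, and on the fact that distances inside $H_k$ dominate those in $G$ only by quantities that the robber cannot exploit once its motion is confined.
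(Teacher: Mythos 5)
Your outline is the Aigner--Fromme argument, which is exactly the approach the paper sketches (Isometric Path Lemma plus inductive enlargement of cop territory), so the overall route is right. Two concrete points need repair before this is a proof. First, your proof of the Isometric Path Lemma tracks the wrong projection: the index $i$ minimizing $d(r,v_i)$ can jump by more than one in a single robber move even when $P=v_0\cdots v_k$ is isometric (take two internally disjoint shortest $v_0$--$v_k$ paths of length $4$ with the robber at the midpoint of the unguarded one; the nearest vertex of $P$ flips from $v_0$ to $v_4$ as the robber steps toward $v_4$). The correct shadow is determined by distance to an endpoint, $v_{\min(d(v_0,r),\,k)}$; since $P$ is isometric this quantity changes by at most one per robber move, and once the cop reaches the shadow she stays on it and captures the robber the moment it steps onto $P$.

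Second, your inductive step silently assumes that the isometric path $Q$ chosen in $H_k$ between the two boundary corners actually contains a vertex of the robber's territory, so that guarding it strictly shrinks the region and frees a cop. This can fail: the shortest path between the corners inside $H_k$ may coincide with one of the two already-guarded boundary arcs, in which case no new territory is gained and no cop is released. Handling this degenerate situation is precisely why the Aigner--Fromme proof maintains \emph{three} distinct types of cop-territory configurations (roughly: territory bounded by nothing, by one guarded isometric path, or by two guarded isometric paths with common endpoints) rather than the single generic configuration you describe, and the transition rules between the types absorb the degenerate case. Your "main obstacle" paragraph correctly identifies where the work lies, but the case analysis itself is the substance of the proof and is missing. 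The remaining ingredients you cite (subpaths of isometric paths remain isometric in induced subgraphs, guarding within $H_k$ suffices because the other cops confine the robber to $H_k$, and the Jordan curve theorem separates the two subregions) are all sound.
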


The proof of Theorem~\ref{theorem:planar} is elementary but non-trivial. It makes essential use of what is nowadays referred to as the \emph{Isometric Path Lemma}. Given an induced subgraph $S$ of
$G$, we say that a set of cops \emph{guards} $S$ if whenever the robber enters $S$, they will be caught by one of the cops. A path $P$ in a graph $G$ is \emph{isometric} if for all vertices $x,y$ in
$P$, their distance in $P$ is the same as in the whole graph; that is, $d_P(x,y) = d_G(x,y)$.

\begin{lemma}[\cite{af}]\label{ipl}
If $P$ is an isometric path in $G$, then one cop can guard $P.$
\end{lemma}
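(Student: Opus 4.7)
The plan is for the cop to shadow a suitable projection of the robber onto $P$. Label $P$ as $v_0 v_1 \cdots v_k$ and define $\pi \colon V(G) \to V(P)$ by $\pi(u) = v_{\min(d_G(u, v_0),\, k)}$. Two properties of $\pi$ will drive the strategy. First, $\pi$ fixes $P$ pointwise: for any $v_j \in P$, the isometric hypothesis gives $d_G(v_j, v_0) = d_P(v_j, v_0) = j$, so that $\pi(v_j) = v_j$; this is the step where isometry is essential, as a shortcut in $G$ between vertices of $P$ would collapse the projection. Second, if $u$ and $u'$ are equal or adjacent in $G$, then by the triangle inequality $|d_G(u, v_0) - d_G(u', v_0)| \le 1$, and hence $\pi(u)$ and $\pi(u')$ are equal or adjacent in $P$.

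The cop's strategy is to start at $v_0$ and in each round take a single step along $P$ toward $\pi(u)$, where $u$ is the robber's current position (or stay put if already at $\pi(u)$). After a brief catch-up phase, the cop will occupy $\pi(u)$ at the end of each cop move; the second property above then guarantees that this invariant can be maintained, since when the robber moves from $u$ to $u'$ the shadow shifts by at most one position along $P$, which the cop can match with a single step on the next turn. If the robber ever moves onto $P$ at a vertex $v_j$, then by the first property the immediately following cop move lands on $\pi(v_j) = v_j$, capturing the robber; since the cop moves before the robber, there is no escape.

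The main technical point is the catch-up phase: the robber could try to keep her shadow one step ahead of the cop by always moving so as to increase the index of $\pi(u)$, leaving the cop-to-shadow gap unchanged for a round. What saves the argument is the boundedness of $P$: the shadow index can never exceed $k$, so after at most $k$ such ``runaway'' moves the shadow is pinned and the cop then closes the gap by one in each subsequent round. A simple potential argument therefore shows that synchronization is achieved in $O(k)$ rounds, after which $P$ is guarded indefinitely by the invariant above, completing the proof.
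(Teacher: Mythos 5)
Your proof is correct and follows essentially the same route as the paper's sketch: you construct an explicit retraction (shadow) of the robber onto $P$, verify it is edge-preserving and fixes $P$ because $P$ is isometric, and have the cop catch and then track the shadow. The only standard caveat, which you already acknowledge via the catch-up phase, is that the guarding guarantee holds only after finitely many initial rounds.
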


Lemma~\ref{ipl} follows by considering a retract of the robber onto $P$. It is sufficient to show the cop can capture the robber's image under the retract.

The idea of the proof of Theorem~\ref{theorem:planar} is to increase the \emph{cop territory}; that is, a set $S$ of vertices such that if the robber moved to $S$, then they would be caught. The cops may begin on a single vertex, and that vertex trivially is cop territory. If the
territory can always be increased, the number of vertices the robber can move to without being caught is eventually reduced to the empty set, and so the robber is captured. The cop does so by
ensuring the cop territory is always one of three types. For each type, we can (by using the Isometric Path Lemma) enlarge the cop territory by induction and remain among the three types. For more
details, the reader is directed to consult the proof presented in the book \cite{bonato} or the original one in \cite{af}. It is interesting to note that the proof only uses
implicitly the Jordan curve theorem and no other deeper properties of planar graphs.

From Theorem~\ref{theorem:planar}, each planar graph is one of three types: those with cop numbers $1$, $2$, or $3$. Cop-win graphs are precisely the dismantlable ones, and that characterization may eventually aid
in the classification of planar cop-win graphs. Nevertheless, the classification of which planar graphs have cop number $i$, for $1\le i \le 3$, remains an open problem.

We also do not understand small order planar graphs with cop number 3. It has been proved in \cite{bevbon} that the smallest order $3$-cop-win graph is the Petersen graph. A conjecture from \cite{bevbon}
is that the smallest order $3$-cop-win planar graph is the dodecahedron.

A graph is \emph{outerplanar} if it can be embedded in the plane so that all its vertices are on the outer face. Outerplanar graphs are precisely those which do not contain a $K_{2,3}$ and $K_4$ as a
minor. The following result was first proven by Clarke in her doctoral thesis.

\begin{theorem}[\cite{clarket}]\label{outer}
If $G$ is an outerplanar graph, then $c(G)\leq 2$.
\end{theorem}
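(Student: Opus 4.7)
The plan is to proceed by induction on $|V(G)|$, invoking the Isometric Path Lemma (Lemma~\ref{ipl}) as the main tool. Small outerplanar graphs (say with at most three vertices) provide the trivial base case.

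First I would reduce to the 2-connected case. If $v$ is a cut vertex of $G$, then $v$ lies in at least two blocks, each of which is outerplanar and strictly smaller than $G$. One cop is stationed at $v$, which prevents the robber from crossing between blocks; the inductive hypothesis is then applied to whichever block currently contains the robber, with the cop at $v$ playing the role of one of the two cops guaranteed by induction (legitimately, since $v$ is a vertex of that block).

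Now assume $G$ is 2-connected outerplanar, so its outer face is a Hamilton cycle $C$. If $G = C$, then two cops win easily by starting on two distinct vertices of $C$ and walking toward the robber from opposite sides, strictly shrinking the arc of $C$ available to the robber at every round. Otherwise $C$ admits a chord $e = xy$, which separates $G$ into two outerplanar subgraphs $G_1, G_2$ with $V(G_1) \cap V(G_2) = \{x, y\}$, each strictly smaller than $G$, so $c(G_i) \le 2$ by induction. The single edge $e$ is trivially an isometric path in $G$, so by Lemma~\ref{ipl} one cop $C_1$ can guard $e$, thereby confining the robber to $V(G_i) \setminus \{x,y\}$ for one of $i \in \{1,2\}$, say $i=1$.

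The main obstacle is finishing with only $C_2$ free once $C_1$ is tied up guarding $e$, since $G_1$ alone may require two cops. I would address this by strengthening the inductive hypothesis to the statement that in any 2-connected outerplanar graph, two cops win with one of them constrained to guard a prescribed edge of the outer face. The chord-splitting step then descends cleanly: inside $G_1$ the edge $e$ lies on the outer face, and the strengthened hypothesis lets us require the already-committed cop $C_1$ to continue guarding $e$ while $C_2$ plays the free role. The delicate point is verifying that this strengthened hypothesis is itself preserved under the chord-splitting construction — in particular, that the freedom to pick the prescribed edge is preserved down the induction. I expect this to be the main technical work, but essentially amounts to careful bookkeeping using Lemma~\ref{ipl} and the fact that every 2-vertex cut in an outerplanar graph corresponds to a chord (or a vertex pair) on the outer face.
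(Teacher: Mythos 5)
Your overall skeleton (reduction to the $2$-connected case, the Hamiltonian outer cycle, chord-splitting, and guarding via Lemma~\ref{ipl}) is the same one Clarke uses, and you correctly locate the crux: once a cop is tied to a chord, only one cop is free. But the resolution you propose does not work, because the strengthened inductive statement is false. Take the $2$-connected outerplanar graph with outer cycle $x,y,a,b,c,d$ and the single chord $ad$, and prescribe the outer-face edge $e=xy$. A cop committed to guarding $e$ only threatens the robber if it enters $\{x,y\}$; the robber simply lives on the induced $4$-cycle $a,b,c,d$, where one free cop can never catch it since $c(C_4)=2$. So no amount of bookkeeping will make the hypothesis ``two cops win with one permanently guarding a prescribed outer-face edge'' propagate through the chord-splitting: it already fails at depth one. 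The same issue infects your cut-vertex reduction, where a cop ``stationed at $v$'' cannot simultaneously stay at $v$ and play an active role in the two-cop strategy on the block.

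The fix, and what the paper's sketch means by the cops ``successively enlarging their territory,'' is a hand-off: cop $C_1$ guards the current separating chord $e$ only temporarily, for as long as it takes $C_2$ to walk to a deeper chord $e'$ (one whose removal together with its endpoints leaves the robber in a strictly smaller region) and establish a guard on it via Lemma~\ref{ipl}. During that interval the robber cannot leave its region because $C_1$ still holds $e$; once $C_2$ controls $e'$, the robber is confined to the smaller region and $C_1$ is \emph{released} to become the free cop for the next stage. The induction is on the size of the robber territory, not on the graph, and the base case is a region with no internal chords, i.e., an induced path with guarded endpoints, which the free cop sweeps. Similarly, the cut-vertex case is handled by retracting onto the block-cut tree and chasing the robber's image, not by permanently parking a cop. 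With these two changes your outline becomes the standard proof.
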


The proof of Theorem~\ref{outer} is simpler than the one for Theorem~\ref{theorem:planar}. For a sketch, the proof breaks into the case when $G$ is $2$-connected or not. In the $2$-connected case,
one first observes that the boundary of the outer face is a Hamiltonian cycle, and the two cops successively enlarge their territory by moving along the cycle towards the robber. The case where there
are cut-vertices involves first a retraction of blocks to cut-vertices, and then using the strategy from the $2$-connected case to chase the robber into a block where the robber cannot escape.

As with planar graphs, the classification of outerplanar graphs which are $1$- and $2$-cop-win remains open. Of course, the smallest outerplanar graph with cop number 2 is the four cycle.

The \emph{length} of a Cops and Robbers game played on $G$ is the number of rounds it takes (not including the initial round) to capture the robber on $G$. We say that a play of the game with $c(G)$
cops is \emph{optimal} if its length is the minimum over all possible strategies for the cops, assuming the robber is trying to evade capture for as long as possible. There may be several optimal
plays possible (for example, on a path with an even number of vertices, the cop may start on either of the two vertices in the center), but the length of an optimal game is an invariant of $G.$ If $k$
cops play on a graph with $k \ge c(G),$ we denote this invariant $\mathrm{capt}_k(G),$ and call it the $k$-\emph{capture time} in $G$. In the case $k=c(G)$, we write $\mathrm{capt}(G)$ and refer to
this as the \emph{capture time} of $G.$\

There is a growing literature on the capture time for graphs with a higher cop number. In~\cite{capt}, the authors proved that if $G$ is cop-win of order $n\ge5,$ then $\mathrm{capt}(G) \leq n-3.$ By
considering small order cop-win graphs, the bound was improved to $\mathrm{capt}(G)\le n-4$ for $n \ge 7$ in~\cite{gav1}. Examples were given of planar cop-win graphs in both \cite{capt,gav1} which
prove that the bound of $n-4$ is optimal. Beyond the case $k=1$, \cite{meh} investigates the capture time of Cartesian grids, which are an important subclass of planar graphs. It was shown in
\cite{meh} that if $G$ is the Cartesian product of two trees, then $\mathrm{capt}(G)=\lfloor \mathrm{diam}(G)/2 \rfloor$.

By results of \cite{over}, bounds were given on the capture time with 3 cops playing on a planar graph.

\begin{theorem}[\cite{over}]\label{thm:3copplanar}
If $G$ is a planar graph of order $n$, then $$\capt_3(G)\leq (\diam(G)+1) n = O(n^2).$$
\end{theorem}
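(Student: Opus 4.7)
The plan is to run the Aigner--Fromme strategy underlying Theorem~\ref{theorem:planar} and charge each round to a step of that strategy. Recall that the strategy maintains a \emph{cop territory} $T\subseteq V(G)$ with the property that the robber cannot enter $T$ without being captured; $T$ grows monotonically during play, and when $T$ exhausts the robber's reachable region the game is over.

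The key bookkeeping step is to organize the play into \emph{phases}, each ending with a strict increase of $|T|$. Within a phase, the cops select an isometric path $P$ inside the currently robber-reachable region whose addition to $T$ will split that region. By Lemma~\ref{ipl}, a single cop suffices to guard $P$ once it is in position; since the previous phase has \emph{freed} one of the three cops (the robber is now confined to one side of the old splitting path, so the cop guarding the other side is no longer needed), this cop can be assigned to $P$. Because $P$ lies in the interior of the robber's region, it contains at least one vertex not already in $T$, so the phase adds at least one new vertex to $T$. Consequently, the play consists of at most $n$ phases.

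Next I would bound the number of rounds per phase. The freed cop must walk to its shadow-maintaining position on $P$. By the standard retract argument behind Lemma~\ref{ipl}, the cop reaches such a position after at most $\diam(G)$ moves, regardless of the robber's actions during that interval: walking toward $\pi_P(r)$, where $\pi_P$ is the retraction of $G$ onto $P$ and $r$ is the robber's current location, the cop closes the gap to the (at most one step per round) moving image by one step per round, and the image lies in $P$ which has diameter at most $\diam(G)$. Allowing one additional round for the handoff between phases gives at most $\diam(G)+1$ rounds per phase.

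Multiplying the two estimates gives $\capt_3(G)\le n(\diam(G)+1)=O(n^2)$, as required. The main obstacle is the second step: one must check that during the $\diam(G)$ rounds in which the incoming cop is positioning itself on $P$, the robber cannot breach the existing territory nor leak out of the region being partitioned. This requires the territory invariant to be preserved \emph{throughout} the positioning, not merely at its end, and is exactly what the shadow/retract formalism of Aigner--Fromme guarantees; no new planarity input beyond what was used to prove Theorem~\ref{theorem:planar} is needed, only the quantitative observation that positioning and phase-progress both happen in $O(\diam(G))$ time.
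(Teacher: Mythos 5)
Your overall architecture is the right one and is essentially the argument behind the result in \cite{over} (the survey itself gives no proof, so I am comparing against the cited source): run the Aigner--Fromme strategy, observe that the cop territory strictly grows in each phase so there are at most $n$ phases, and bound the length of a single phase by the time needed for the freed cop to begin guarding the next isometric path. The phase count is fine, as is your remark that the territory invariant must hold throughout the positioning interval and not just at its end.

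The gap is in the per-phase time bound. You assert that the cop, ``walking toward $\pi_P(r)$, closes the gap to the (at most one step per round) moving image by one step per round.'' That reasoning is invalid: the cop's move decreases $d_G(c,\pi_P(r))$ by one, but the robber's subsequent move can increase it by one, so the naive potential $d_G(c,\pi_P(r))$ need not decrease at all. The correct catch-up argument uses the fact that the shadow is confined to $P$: the cop first travels to an endpoint $v_0$ of $P$ (at most $\diam(G)$ rounds) and then sweeps along $P$ toward the other endpoint $v_k$; since the shadow's index is capped at $k\le\diam(G)$ and the two tokens cannot swap positions on a path without coinciding, the cop overtakes the shadow within a further $k$ rounds. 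This yields at most $2\diam(G)+O(1)$ rounds per phase, hence $\capt_3(G)=O(n\,\diam(G))=O(n^2)$, but it does not deliver the stated constant $\diam(G)+1$ per phase. So as written your argument proves the asymptotic statement but not the precise bound $(\diam(G)+1)n$; to get that constant you need either a sharper quantified Isometric Path Lemma (a genuine potential-function argument for the chase, not the one-step-per-round claim) or a different amortization of the travel time across phases, and this missing quantitative lemma is exactly the content you have elided.
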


The bound in Theorem~\ref{thm:3copplanar} is an improvement over the bound (for any $G$) that for $k\geq c(G)$, $\capt_k(G)\leq n^{c(G)+1}$~\cite{BI}.  It does not, however, improve the bound of
$n-4$ for (planar or otherwise) cop-win graphs~\cite{capt,gav1}. Further, the $O(n^2)$ bound can be improved to the recent linear bound.

\begin{theorem}[\cite{pt}]\label{ptt}
If $G$ is a planar graph of order $n$, then $\capt_3(G)\leq 2n$.
\end{theorem}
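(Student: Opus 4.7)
The plan is to refine the Aigner--Fromme territory argument from Theorem~\ref{theorem:planar} with a careful amortized accounting of time. Fix a planar embedding of $G,$ choose a root vertex $r,$ and fix a BFS tree $T$ of $G$ rooted at $r.$ Every $r$-to-$v$ path in $T$ is an isometric path of $G,$ so by Lemma~\ref{ipl} any such path can be guarded by a single cop. Place cop $C_1$ permanently at $r,$ and use $C_2, C_3$ as the two \emph{active} cops.

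Throughout the game I maintain a cop territory $R,$ an induced subgraph that the robber dare not enter, whose topological boundary in the embedding consists of $r$ together with at most two $T$-paths from $r,$ each guarded by $C_2$ or $C_3.$ These two guarded paths, together with $r,$ separate the plane into a single region $F$ to which the robber is confined. Each phase proceeds as in the classical three-cop argument: the currently free cop establishes a new guarded isometric $T$-path $P'$ that severs a proper sub-region $F' \subsetneq F$ from $F.$ Once $P'$ is in place, the cop that formerly guarded the replaced piece of the boundary is released, $F'$ becomes the new robber region, and $F \setminus F'$ is annexed into $R.$

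The crux is to bound each phase. I would argue that a phase lasting $t$ rounds absorbs at least $t/2$ new vertices into $R.$ To see this, select $P'$ so that its branching point off the existing boundary lies close, in $T$-distance, to the current position of the free cop; the free cop can then walk along $T$-edges through vertices that either lie in $F \setminus F'$ or on the boundary itself, until it reaches its shadow on $P'.$ The travel time is thereby \emph{charged} against the newly annexed vertices, with a factor of at most $2$ to cover both the outbound trip to the shadow and the subsequent repositioning of the cop that is released. Since $|R|$ grows monotonically from $1$ to at most $n,$ summing over all phases yields a total of at most $2n$ rounds.

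The main obstacle is making the per-phase scheduling precise: one must exhibit, at the beginning of each phase, an isometric path $P'$ whose branching point from the current boundary is within distance $O(|F \setminus F'|)$ of the free cop in $T.$ The existence of such a $P'$ relies on the Jordan curve theorem together with the BFS structure of $T,$ which guarantee that some $r$-to-$v$ path of $T$ enters $F$ through an arc of its topological boundary adjacent to the free cop's position; a short-enough such path can be extracted by walking the boundary of $F$ from the free cop until a $T$-edge leaves $F.$ Once this geometric lemma is in hand, the amortized argument yields $\capt_3(G) \leq 2n.$
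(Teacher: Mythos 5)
There are two genuine gaps here. First, parking $C_1$ permanently at $r$ breaks the Aigner--Fromme machinery you are trying to refine: in the essential case of that argument the robber region is bounded by \emph{two} guarded isometric paths, each of which requires a dedicated cop, while the \emph{third} cop travels to establish the new path $P'$. If $C_2$ and $C_3$ are both tied up guarding the boundary of $F$, your ``currently free cop'' does not exist, and a single vertex guard at $r$ cannot substitute for a third path-guarding cop (otherwise two cops would suffice on planar graphs, which is false). Second, and more seriously, the amortized claim that a phase of length $t$ annexes at least $t/2$ vertices is exactly the content of the theorem, and it is asserted rather than proved. The cost of a phase is not just the free cop's walk to some vertex of $P'$: by Lemma~\ref{ipl} the cop must first reach the robber's \emph{shadow} (the image under the retraction onto $P'$) before $P'$ is guarded, and the robber controls where that shadow sits and how it moves during the alignment period. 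Nothing ties this alignment time to $|F\setminus F'|$; the robber can force a long trip and then concede only a single vertex, which is precisely why the naive accounting gives the $(\diam(G)+1)n$ bound of Theorem~\ref{thm:3copplanar} rather than $2n$. Your proposed fix --- choosing $P'$ to branch off the boundary within $T$-distance $O(|F\setminus F'|)$ of the free cop --- is the ``geometric lemma'' on which everything rests, and you give no construction of it; it is not clear such a $P'$ exists while still separating a region containing the robber, since the robber may position itself so that every separating $T$-path branches far from the free cop.

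For calibration: the survey states this result as a citation to Pisantechakool and Tan and gives no proof, so you are not deviating from a printed argument; and the published proof is indeed a refinement of the Aigner--Fromme territory-expansion strategy with a careful charging scheme, so your instincts about the overall shape are sound. But the charging scheme is the entire difficulty, and as written your proposal defers it to an unproved lemma while also disabling the third cop. To salvage this you would need to (i) restore the full three-cop rotation, and (ii) prove a concrete statement of the form ``each round of the game can be charged to a vertex that permanently leaves the robber's territory, with each vertex charged at most twice,'' including the rounds spent aligning a cop with its shadow on a newly chosen path.
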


Recent work by Kinnersley \cite{k} and independently by Brandt et al.~\cite{BEUW} shows that there exist families of graphs with cop number $k$ such that $$\capt_k(G) = \Theta(n^{c(G)+1}).$$ We do not know if the
linear bound in Theorem~\ref{ptt} is tight; even providing an example of a planar graph satisfying $\capt(G) > n$ remains open at the time of writing this survey.

If there are many cops playing on a planar graph, then we can reduce the capture time much further. The following result exploits the planar separator theorem of Lipton and Tarjan \cite{LT} (with an
improvement by Alon, Seymour, and Thomas~\cite{AST94}): there is a set of at most $2.13\sqrt{n}$ vertices that separate the graph into two sets of size at most $\frac23 n$. We write
$$r(G)=\min_{x\in V(G)}\max_{y\in V(G)} d_G(x,y)$$ for the \emph{radius} of the graph $G$.

\begin{theorem}[\cite{over}]\label{planarc} If $G$ is a planar graph of order $n$ and $k\geq 12\sqrt{n}$, then $$\capt_k(G)\leq 6r(G)\log n.$$
\end{theorem}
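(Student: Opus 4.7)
The plan is to exploit the planar separator theorem repeatedly, confining the robber to a shrinking planar subgraph by freezing a small new batch of cops on each successive separator. Because each application reduces the robber's territory by a factor of $\tfrac{2}{3}$ while costing at most $2.13\sqrt{n_i}$ cops, the full procedure will terminate in $O(\log n)$ phases and fit inside the $12\sqrt{n}$ cop budget.

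Concretely, I would fix a vertex $c \in V(G)$ of eccentricity $r(G)$ and initially station every cop at $c$. Set $G_0 := G$ and $n_0 := n$. In phase $i \ge 0$, apply the Alon--Seymour--Thomas form of the planar separator theorem to the planar subgraph $G_i$ to obtain $S_i \subseteq V(G_i)$ with $|S_i| \le 2.13\sqrt{n_i}$ and with each component of $G_i \setminus S_i$ of order at most $\tfrac{2}{3}n_i$. Release $|S_i|$ previously unused cops from $c$ and have each walk along a shortest path in $G$ to its assigned vertex of $S_i$; since $c$ has eccentricity $r(G)$, every cop arrives within $r(G)$ rounds. Once the batch is fully in place, the robber, if not yet captured, must occupy a vertex outside $S_i$ (else it would share that vertex with a cop) and therefore lies in some component $G_{i+1}$ of $G_i \setminus S_i$ with $n_{i+1} := |V(G_{i+1})| \le \tfrac{2}{3} n_i$. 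Cops from batches $0,\dots,i$ remain frozen permanently and, by induction, their vertices block every path out of $G_{i+1}$ in $G$.

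Iterating until $n_i = 1$ requires at most $N := \lceil \log_{3/2} n \rceil$ phases, after which a final cop walks from $c$ to the surviving vertex. The total number of cops used is
\[
\sum_{i \ge 0} |S_i| \;\le\; 2.13\sqrt{n}\sum_{i \ge 0} \bigl(\tfrac{2}{3}\bigr)^{i/2} \;=\; \frac{2.13}{1-\sqrt{2/3}}\,\sqrt{n} \;<\; 12\sqrt{n},
\]
comfortably within budget, and the total number of rounds is at most $(N+1)\,r(G) \le 6\,r(G)\log n$, with ample slack no matter what base is chosen for the logarithm.

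The main obstacle I anticipate is the in-transit behavior during phase $i$: while the batch is still marching toward $S_i$, individual vertices of $S_i$ are unoccupied and the robber can in principle slip through them from one component of $G_i \setminus S_i$ to another. This turns out to do no harm, because the inductive hypothesis never prescribes \emph{which} component traps the robber; it only asserts that at the \emph{end} of the phase, when every vertex of $S_i$ is occupied, the robber is at some vertex outside $S_i$, and we then define $G_{i+1}$ to be whichever component happens to contain it. Earlier batches remain in place throughout, so the nested containment $G_0 \supset G_1 \supset \dots$ is maintained and the geometric series above closes. Getting this synchronization point right is the only subtle step; everything else is bookkeeping.
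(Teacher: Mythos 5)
Your argument is correct and is essentially the proof from \cite{over} that the survey alludes to: iterate the Alon--Seymour--Thomas separator on the robber's current territory, march each batch of cops from a central vertex in at most $r(G)$ rounds, note that the fully occupied earlier separators confine the robber at the end of each phase, and sum the geometric series $2.13\sqrt{n}\sum_{i\ge 0}(2/3)^{i/2}<12\sqrt{n}$ over the $O(\log n)$ levels. The only points worth tightening are the trivial base case of small $n$ (where the extra capturing cop could push the count past $11.61\sqrt{n}+1$, but there $k\ge 12\sqrt{n}\ge n$ so one cop per vertex suffices) and the observation that the final capture can instead be made by an already-frozen cop on a neighbour of the robber's last vertex, so no extra cop or extra $r(G)$ rounds are needed at all.
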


Theorem~\ref{planarc} works even in a version of the game in which the robber is allowed to move \emph{infinitely fast}; that is, they can move to any vertex in the same component of the graph minus
the vertices occupied by the cops.

Similarly, the separator theorem for graphs of genus $g$ by Gilbert, Hutchinson, and Tarjan~\cite{GHT} can be used to obtain the following.

\begin{corollary}[\cite{over}]
If $G$ is a graph of genus $g$ and $k\geq (19+66\sqrt{g})\sqrt{n}$, then $$\capt_k(G)\leq 6r(G)\log n.$$
\end{corollary}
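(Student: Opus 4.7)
The plan is to imitate the proof of Theorem~\ref{planarc} verbatim, replacing the Lipton--Tarjan planar separator theorem by its genus-$g$ analogue due to Gilbert, Hutchinson, and Tarjan~\cite{GHT}. Their result produces, in any graph of genus $g$ on $n$ vertices, a vertex separator $S$ with $|S|\le C_1\sqrt{n}+C_2\sqrt{gn}$ whose removal breaks the graph into pieces of size at most $\tfrac{2}{3}n$. An arithmetic check with the constants from~\cite{GHT} shows that $(C_1+C_2\sqrt{g})/(1-\sqrt{2/3})\le 19+66\sqrt{g}$, matching the threshold on $k$ in the statement.

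The cop strategy is recursive. Fix a vertex $v^{*}$ realizing the radius, so every vertex of $G$ lies within $G$-distance $r(G)$ of $v^{*}$. At level $0$, at most $C_1\sqrt{n}+C_2\sqrt{gn}$ cops march from $v^{*}$ along shortest paths in $G$ to occupy a separator $S_0$. This takes at most $r(G)$ rounds, after which even an infinitely fast robber is confined to a single component $H_1$ of $G-S_0$ with $|V(H_1)|\le\tfrac{2}{3}n$. The cops on $S_0$ remain stationary forever. A fresh batch of cops now repeats the construction on $H_1$, which, being a subgraph of $G$, still has genus at most $g$; the new cops walk through $G$, so their deployment again costs at most $r(G)$ rounds. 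After at most $\log_{3/2}n=O(\log n)$ levels, the robber's territory is empty and capture occurs.

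The total cop budget is the sum of separator sizes over all levels, which is a geometric series bounded by $(C_1+C_2\sqrt{g})\sqrt{n}\cdot\frac{1}{1-\sqrt{2/3}}\le(19+66\sqrt{g})\sqrt{n}$. The number of rounds is at most $r(G)$ per level times $O(\log n)$ levels, and the constants can be tuned so that the final bound is $6r(G)\log n$, exactly as in the planar case. The main obstacle is bookkeeping: one must verify that cops stationed on earlier separators genuinely block an infinitely fast robber (immediate from the definition of a vertex separator once one checks that movement along $G$-shortest paths from $v^{*}$ never prematurely enlarges the robber's component), that the subgraphs $H_i$ inherit the genus bound, and that the numerical constants from~\cite{GHT} combine with the geometric factor $1/(1-\sqrt{2/3})$ to yield precisely the coefficients $19$ and $66$ appearing in the corollary.
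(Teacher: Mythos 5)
Your proposal is correct and follows essentially the same route as the source: the paper (a survey) does not prove this corollary but simply notes that it follows from the proof of Theorem~\ref{planarc} with the Lipton--Tarjan/Alon--Seymour--Thomas planar separator replaced by the Gilbert--Hutchinson--Tarjan separator for genus-$g$ graphs, which is precisely the recursive ``station cops on a separator, confine the robber to a $\tfrac{2}{3}$-fraction component, and recurse'' strategy you describe. The only part you leave unverified is the arithmetic tying the GHT constants and the geometric factor $1/(1-\sqrt{2/3})$ to the coefficients $19$ and $66$, which is exactly the bookkeeping deferred to \cite{over} in the paper as well.
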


The same holds for the nonorientable genus, for which a separator theorem holds as well (see~\cite{AST94}).

\section{Higher genus graphs}\label{secg}

Less is known about the cop number of graphs with positive genus. Quilliot \cite{Q2} proved the following, also by using the notion of expanding cop territory by induction.

\begin{theorem}[\cite{Q2}]
If $G$ is a graph of genus $g,$ then $c(G)\leq 2g+3.$
\end{theorem}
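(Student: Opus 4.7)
The plan is to prove the bound by induction on the genus $g$, closely following the template of the Aigner--Fromme proof of Theorem~\ref{theorem:planar} but enlisting two additional cops for each handle. The base case $g=0$ is precisely Theorem~\ref{theorem:planar}, giving $c(G)\le 3 = 2(0)+3$.

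For the inductive step, assume the bound holds for all graphs of genus less than $g$, where $g\ge 1$, and fix an embedding of $G$ in an orientable surface $\Sigma$ of genus $g$. The key step is to identify a shortest non-contractible cycle $C$ of $G$ in this embedding. A standard exchange argument shows such a $C$ must be simple and isometric in $G$: otherwise, a shorter $u$--$v$ path through $G$ between two vertices on $C$ could be concatenated with one of the two arcs of $C$ to give a strictly shorter non-contractible closed walk. Splitting $C$ at any vertex into two arcs $P_1, P_2$ of lengths $\lceil |C|/2\rceil$ and $\lfloor |C|/2\rfloor$ yields two isometric paths in $G$, since two vertices lying on the same arc have their cyclic distance realized within that arc. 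By Lemma~\ref{ipl}, one cop suffices to guard each $P_i$, so two cops jointly guard $C$: after finitely many rounds, the robber is captured the moment they step onto any vertex of $C$.

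Once $C$ is permanently guarded, the robber is confined to some connected component $H$ of $G-V(C)$. Because $C$ is a non-contractible simple closed curve on $\Sigma$, cutting $\Sigma$ along $C$ and capping off the new boundary circles with disks yields a (possibly disconnected) surface: if $C$ is non-separating, we obtain a connected surface of genus $g-1$; if $C$ is separating, we obtain two surfaces of genera $g_1, g_2\ge 1$ summing to $g$, each therefore at most $g-1$. In either case $H$ embeds in a surface of genus at most $g-1$, and the induction hypothesis furnishes $2(g-1)+3 = 2g+1$ additional cops who suffice to capture the robber within $H$. Altogether this uses $2+(2g+1) = 2g+3$ cops.

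The main obstacle is merging the two strategies coherently: the two cops guarding $C$ shadow the robber's projection onto $P_1\cup P_2$ in $G$, while the remaining $2g+1$ cops must execute an inductive strategy designed for $H$ and its smaller-genus embedding, even though they physically play inside $G$. One must check that the inductive moves never require traversal of $V(C)$ (this follows from the robber's confinement to $H$, which lets all shadow moves be carried out within $H$) and that the isometric-path guarantees used recursively inside $H$ are measured with respect to $H$-distances rather than $G$-distances, so that shortcuts through $V(C)$ do not invalidate them. The supporting topological facts --- simplicity and isometry of a shortest non-contractible cycle, and the strict drop in genus upon cutting --- are standard but deserve verification.
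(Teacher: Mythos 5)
The survey does not reproduce Quilliot's argument; it only cites \cite{Q2} and notes that the proof proceeds by inductively expanding the cop territory. Your strategy --- guard a shortest non-contractible cycle with two cops via Lemma~\ref{ipl}, cut the surface along it to drop the genus by at least one, and recurse down to the planar case of Theorem~\ref{theorem:planar} --- is the standard route to the bound $2g+3$ and is essentially the scheme attributed to \cite{Q2} (Schroeder~\cite{schr} refines exactly this scheme to obtain $\lfloor 3g/2\rfloor+3$). The topological facts you defer are indeed standard but need the right setup: one must work with a cellular (for instance, minimum-genus) embedding so that a non-contractible cycle exists at all, and the isometry of a shortest non-contractible cycle follows from the three-path property together with the fact that a non-contractible closed walk contains a non-contractible cycle (see \cite{MT}). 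Your handling of the merge is sound: once $C$ is guarded the robber is confined to a component $H$ of $G-V(C)$, and the resulting sub-game is literally the game on the standalone graph $H$, so the induction hypothesis $c(H)\le 2\gamma(H)+3\le 2(g-1)+3$ applies verbatim with all distances and retractions taken in $H$.

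The one step that fails as written is the splitting of $C$ into two arcs of lengths $\lceil|C|/2\rceil$ and $\lfloor|C|/2\rfloor$. When $|C|$ is odd the longer arc is not isometric in $G$: its endpoints are at distance $\lceil|C|/2\rceil$ along the arc but at distance $\lfloor|C|/2\rfloor$ in $C$, hence in $G$. (Take $C=v_0v_1v_2v_3v_4v_0$ isometric of length $5$: the arc $v_2v_3v_4v_0$ gives $d_P(v_2,v_0)=3$ while $d_G(v_2,v_0)=2$.) So Lemma~\ref{ipl} does not apply to that arc, and your justification --- that two vertices on the same arc have their cyclic distance realized within the arc --- is exactly what breaks. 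The repair is immediate: guarding concerns only the vertex set of $C$, so cover $V(C)$ by two subpaths each of length at most $\lfloor|C|/2\rfloor$, for example $v_0\cdots v_{\lfloor|C|/2\rfloor}$ and $v_{\lfloor|C|/2\rfloor}\cdots v_{|C|-1}$, leaving the edge $v_{|C|-1}v_0$ in neither path. Each of these is isometric in $G$, and two cops guarding them guard all of $V(C)$. With that correction the proof goes through.
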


The best known bound for the cop number of genus $g$ graphs comes from \cite{schr}, and this provides a major conjecture on the cop number.

\bigskip

\noindent \textbf{Schroeder's conjecture}: {\it If $G$ is a graph of genus $g$, then $c(G)\leq  g + 3.$}

\bigskip

Schroeder's conjecture holds for $g=0$ (planar graphs) by Theorem~\ref{theorem:planar}. Essentially, if the conjecture holds, then we would need one additional cop beyond the planar case for each handle of
our surface. In the same paper where his conjecture was stated, Schroeder proved the following bound which was until recently the best known upper bound in general.

\begin{theorem}[\cite{schr}]\label{s2}
If $G$ is a graph of genus $g,$ then $$c(G)\leq \left \lfloor \frac{3g}{2} \right\rfloor +3.$$
\end{theorem}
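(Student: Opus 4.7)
My plan is to prove $c(G)\le \lfloor 3g/2\rfloor+3$ by strong induction on the genus $g$, where the gain of one cop per pair of handles (compared with Quilliot's $2g+3$) comes from a single step in which three cops jointly cancel two handles. Both base cases satisfy the bound: Theorem~\ref{theorem:planar} gives $c(G)\le 3$ when $g=0$, and the known toroidal result (Schroeder's conjecture is verified for the torus, as noted above) gives $c(G)\le 4=\lfloor 3/2\rfloor+3$ when $g=1$.

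For the inductive step with $g\ge 2$, fix a cellular embedding of $G$ in an orientable surface $S_g$. I would deploy three extra cops whose permanent guarding duties confine the robber to a subgraph $G'$ embeddable in a surface of genus at most $g-2$; the inductive hypothesis then finishes the game with $\lfloor 3(g-2)/2\rfloor+3=\lfloor 3g/2\rfloor$ further cops, giving $\lfloor 3g/2\rfloor+3$ in total. To build the three guarded subgraphs, I would root a BFS tree $T$ at some vertex $v$ and find two short closed walks $C_1,C_2$, each formed by two root-paths in $T$ plus one non-tree edge, such that the corresponding curves in $S_g$ are non-contractible and homologically independent. Each $C_i$ is a union of two isometric paths sharing both endpoints, so by a mild extension of Lemma~\ref{ipl} (apply the retract argument to each half and reconcile with a parity check at the shared endpoints) one cop guards it. A third cop guards an isometric path $P$ that separates the two ``sides'' of the residual surface where the two cuts might otherwise fuse. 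Once these three cops are in position, cutting $S_g$ along $C_1\cup C_2\cup P$ yields a surface of genus at most $g-2$ into which the robber-accessible subgraph $G'$ embeds, and the induction finishes the game.

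The main obstacle is the topological step: finding $C_1$ and $C_2$ that are genuinely homologically independent, as opposed to isotopic or cobounding an annulus on $S_g$ (in which case the cuts would merely produce extra boundary components without reducing the genus). To handle this, I would use a greedy homotopy-basis argument: pick $C_1$ to minimize length among non-contractible closed walks through $v$, and then pick $C_2$ to minimize length among those whose class is independent of $[C_1]$ in $H_1(S_g;\mathbb{Z})$. Such a $C_2$ exists because $H_1(S_g;\mathbb{Z})$ has rank $2g\ge 4$, and the BFS structure together with the minimality forces each $C_i$ to split into two isometric halves at its ``antipode'' from $v$. A secondary issue is confirming that cutting along $C_1\cup C_2\cup P$ really does produce a surface of genus $\le g-2$, which requires verifying general position at shared vertices and ensuring no handle slips through; the third cop and a careful choice of $P$ are dedicated to this bookkeeping. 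The remaining game-theoretic steps — permanence of guarded territory, retraction of the robber-reachable part of $G\setminus (C_1\cup C_2\cup P)$ to a graph of smaller genus, and invocation of the inductive hypothesis — then follow by standard arguments in the Aigner--Fromme tradition.
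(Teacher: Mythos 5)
The survey states this theorem only as a cited result of Schroeder, so I am judging your sketch against the known argument rather than a proof printed here. There is a genuine gap at the one point where your plan would beat Quilliot's bound of $2g+3$: the claim that a single cop can guard each closed walk $C_i$ because it is ``a union of two isometric paths sharing both endpoints.'' This is false. If $C$ is an isometric cycle of even length $2k\ge 4$ (take $G=C$ itself to see the obstruction in its purest form), one cop can never catch a robber who simply preserves antipodality, so $c(C_{2k})=2$ and in particular one cop does not guard $C$. The retract argument behind Lemma~\ref{ipl} relies on the robber's shadow living on a path, where the shadow-chasing terminates at an endpoint; on a closed walk it need not terminate, and no ``parity check at the shared endpoints'' repairs this. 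This is exactly why Aigner--Fromme and Quilliot assign \emph{two} cops to the two isometric halves of a non-contractible cycle. Corrected, your step spends $4$ cops (plus the one for $P$) to reduce the genus by $2$, recovering at best Quilliot's $2g+3$ rather than $\lfloor 3g/2\rfloor+3$. Conversely, if your one-cop-per-cycle claim were true you would not need to pair up handles at all: one cop per non-separating cycle would reduce the genus by $1$ per cop and prove $c(G)\le g+3$, i.e.\ Schroeder's conjecture itself --- which should have been a warning sign.

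Two further points. Your base case $g=1$ is circular: the bound $c(G)\le 4$ for toroidal graphs is obtained in this survey, and in the literature, as an instance of the very theorem you are proving (Quilliot gives only $5$ when $g=1$). And the true source of the $3/2$ in Schroeder's argument is not a local ``three cops cancel two handles'' move but an amortized accounting: the induction is run on a strengthened statement in which an already-guarded isometric path on the boundary of the cop territory is carried along and reused in the next genus-reduction step, so that consecutive reductions cost $2$ and then $1$ cop. Your sketch would need to be rebuilt around such a strengthened hypothesis, and around guarding isometric paths rather than closed walks, to have a chance of yielding the stated bound.
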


As an application of Theorem~\ref{s2}, we note that the cop number of toroidal graphs is at most 4. In the same paper, it is shown that the cop number of any graph of genus 2 is at most 5. The conjecture,
therefore, remains open for all graphs with genus at least 3.

Recent work by Bowler, Erde, Lehner, and Pitz \cite{BELP} gives the first improvement since 2001 on the multiplicative constant in the bounds on the cop number for graphs with a given genus.

\begin{theorem}[\cite{BELP}]
If $G$ is a graph of genus $g,$ then for some constant $D$ $$c(G)\leq \left \lfloor \frac{4g}{3} \right \rfloor +D.$$
\end{theorem}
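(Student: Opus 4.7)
The plan is to strengthen the Aigner--Fromme/Schroeder cop-territory-expansion strategy by introducing a more economical scheme that reduces genus by three using only four cops, improving on Schroeder's rate of three cops per two handles. I would argue by induction on the genus $g$, with base cases $g \in \{0,1,2\}$ already covered by Theorems~\ref{theorem:planar} and~\ref{s2} (so $D$ should be chosen $\geq 5$ to cover these). The inductive hypothesis asserts $c(H) \leq \lfloor 4h/3 \rfloor + D$ for every connected graph $H$ of genus $h < g$, and we must prove the same for $G$ of genus $g \geq 3$.

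For the inductive step, the heart of the argument is to locate, inside $G$, a configuration of four isometric paths $P_1,P_2,P_3,P_4$ such that, embedding $G$ in a surface $\Sigma$ of genus $g$, the union $P_1 \cup P_2 \cup P_3 \cup P_4$ contains a cut system killing three handles. By Lemma~\ref{ipl}, each such path can be permanently guarded by a single cop. Once all four cops are in position, the robber is confined to the reachable region of $G$ lying in $\Sigma$ minus these guarded paths, which embeds in a surface of genus at most $g-3$. Applying the inductive hypothesis to that residual subgraph yields an additional $\lfloor 4(g-3)/3 \rfloor + D = \lfloor 4g/3 \rfloor + D - 4$ cops, bringing the total to $\lfloor 4g/3 \rfloor + D$, as required.

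To produce the four paths, I would fix a basepoint $v_0$ and grow a BFS tree $T$ from $v_0$; each non-tree edge closes a cycle in $T$ consisting of two isometric tree-paths. The goal is to find three non-separating simple closed curves in $\Sigma$ realized by such cycles that \emph{share structure} in a way that lets four isometric paths host all three cuts rather than the naive six. For example, three cycles sharing a common base segment, or forming a ``theta'' or ``double-theta'' configuration with common subpaths, count their shared segments only once against the cop budget. The existence of such a thrifty triple when $g \geq 3$ is a geometric/Euler-characteristic statement about $\Sigma$: sufficiently many disjoint short non-contractible curves must be reroutable through a common basepoint or shared geodesic segment. Once the cut system is fixed, the usual shadow-strategy argument (cops chase a retracted image of the robber onto each $P_i$) establishes the guarding.

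The main obstacle is precisely this topological existence statement: that in any embedding of genus $g \geq 3$ one can always exhibit four isometric paths whose union carries three independent non-separating cycles. A mere systolic bound gives short non-contractible cycles but not the sharing structure needed to save two paths out of six. One natural route is to analyse the dual of a BFS ``tree--cotree'' decomposition and select three cotree edges whose associated fundamental cycles meet in long common subpaths; a pigeonhole/Euler argument on the $2g$ handles should force such coincidences once $g \geq 3$. Transient cops required to establish the configuration (and to rule out the robber slipping through shared junction vertices) can be absorbed into the additive constant $D$, which will need to be somewhat large but universal. After this topological lemma is in hand, the induction closes cleanly.
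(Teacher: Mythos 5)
The survey states this theorem without proof; it is quoted from the Bowler--Erde--Lehner--Pitz preprint \cite{BELP}, so there is no in-paper argument to compare yours against. Judged on its own terms, your outline has the right \emph{accounting} --- four permanently committed cops per three handles, base cases from Theorems~\ref{theorem:planar} and~\ref{s2}, and the arithmetic $4+\lfloor 4(g-3)/3\rfloor+D=\lfloor 4g/3\rfloor+D$ is correct --- but the entire mathematical content of the theorem is concentrated in the step you explicitly defer: the existence, in \emph{every} graph embedded in a surface $\Sigma$ of genus $g\ge 3$, of four isometric paths whose union carries three non-separating cycles that are independent in $H_1(\Sigma)$ and whose removal leaves the robber's territory embeddable in genus at most $g-3$. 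A generalized theta of four geodesics between two vertices does have cycle rank $3$, but nothing forces its three independent cycles to be non-separating, let alone homologically independent on $\Sigma$: fundamental cycles of a BFS tree--cotree decomposition can all be contractible or mutually homologous, and the ``pigeonhole on $2g$ handles'' heuristic does not obviously produce the shared-segment structure that saves two of the six naive paths. Since Schroeder's $\lfloor 3g/2\rfloor+3$ already encodes the best known amortization along these lines, asserting a thriftier cut system without proof is not an improvement but a restatement of the problem. This is a genuine gap, not a detail.

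There are also secondary issues you would need to repair even granting the topological lemma. Once the robber is confined to a proper subgraph $G'$, a path that was isometric in $G$ need not be isometric in $G'$, so Lemma~\ref{ipl} must be invoked for paths that are geodesics \emph{in the current robber territory}; this forces the cut system to be chosen adaptively at each stage rather than once from a single BFS tree in $G$. You must also verify that cutting reduces the genus of the component actually containing the robber (a non-separating curve on $\Sigma$ can still fail to lower the genus of the relevant piece of the graph's induced embedding), and that the ``transient'' cops are a single reusable pool across all levels of the induction rather than fresh cops per level, since otherwise they contribute $\Theta(g)$ rather than being absorbed into $D$. For the record, the actual argument in \cite{BELP} does not proceed by exhibiting such a four-path/three-handle configuration; it reworks the guarding machinery itself (how cops are released and reused as the territory shrinks), which is why it achieves a better ratio than a purely combinatorial cut-system count.
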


Andreae~\cite{A} first asked whether toroidal graphs do in fact have cop number at most 3, and this problem was referenced in \cite{schr}. To this day, no example of a toroidal graph is known with cop
number 4. Hence, we have the following conjecture.

\bigskip

\noindent \textbf{Andreae-Schroeder conjecture}: {\it Any toroidal graph has cop number at most $3$.}

\bigskip

An interesting open question is to examine bounds on the capture time for higher genus graphs. If Schroeder's conjecture holds, then if $\gamma(G) = g,$ we would have that $\capt_k(G) = O(n^{g+4})$ for every $k\ge g+3$.

We now turn to the non-orientable case. The first result in this direction was by Andreae \cite{A} who proved the following.

\begin{theorem}[\cite{A}]\label{thm:nonorigenus}
If $G$ is a graph of non-orientable genus $g,$ then $$c(G)\leq \binom{\left \lfloor \frac{7}{2} + \sqrt{6g +\frac{1}{4}} \right \rfloor}{2}.$$
\end{theorem}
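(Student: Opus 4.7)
The plan is to reduce the theorem to a minor-exclusion statement, then invoke a general bound on the cop number for graphs that forbid a complete-graph minor. The observation driving the reduction is that taking a minor cannot increase the non-orientable genus, so if $G$ embeds in the non-orientable surface $N_g$, then every minor of $G$ also embeds in $N_g$. In particular, $G$ avoids $K_n$ as a minor as soon as $K_n$ itself fails to embed in $N_g$. The two ingredients to assemble are therefore (i) a topological determination of the largest complete graph embeddable in $N_g$, and (ii) a bound on $c(G)$ in terms of the size of the smallest excluded complete minor.

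For the topological step I would rely on the non-orientable Euler formula: any simple graph with $n$ vertices embedded in $N_g$ has at most $3(n+g-2)$ edges. Forcing $\binom{n}{2} > 3(n+g-2)$ gives the quadratic inequality $n^2 - 7n + 12 - 6g > 0$, which rules out the embedding of $K_n$ once
\[
n \;>\; \tfrac{7 + \sqrt{24g+1}}{2} \;=\; \tfrac{7}{2} + \sqrt{6g+\tfrac14}.
\]
Writing $h(g) = \lfloor 7/2 + \sqrt{6g+1/4}\rfloor$, this shows that any $G$ with $\widetilde{\gamma}(G) \le g$ has no $K_{h(g)+1}$-minor. Note that this uses only elementary Euler bookkeeping; no appeal to the full Ringel--Youngs theorem (and no worry about the Klein-bottle $K_7$ exception) is needed.

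For the cop-number step, I would invoke Andreae's minor-exclusion theorem from \cite{A}, which bounds the cop number of any $K_h$-minor-free graph by $\binom{h-1}{2}$. The proof of that bound proceeds inductively, enlarging a guarded retract of $G$: at each stage a new cop is added to guard a fresh isometric path via Lemma~\ref{ipl}, and the process is shown to terminate before $\binom{h-1}{2}$ cops are used, since otherwise one can read off the branch sets of a $K_h$-minor from the accumulated guarded structure. Applied with $h = h(g)+1$, this yields precisely $c(G) \le \binom{h(g)}{2}$, which is the claimed bound.

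The main obstacle is Andreae's minor-exclusion bound itself: the inductive construction of guarded retracts has to be arranged so that the bookkeeping produces the sharp constant $\binom{h-1}{2}$ rather than a cruder polynomial in $h$, and the extraction of a $K_h$-minor from a failure of the induction is the subtle combinatorial core of the argument. The topological half is, by contrast, a short Euler-formula calculation, and the passage from ``no $K_{h(g)+1}$-minor'' to the cop-number bound is then immediate.
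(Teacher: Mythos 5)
Your proposal is correct and follows essentially the same route the paper indicates: the theorem is obtained as a corollary of Andreae's general minor-exclusion bound $c(G)\le |E(H)|-\Delta$, applied with $H=K_{h+1}$ where $h=\lfloor 7/2+\sqrt{6g+1/4}\rfloor$, after the Euler-formula computation shows that $K_{h+1}$ cannot embed in (and hence is not a minor of any graph embeddable in) the non-orientable surface of genus $g$. The arithmetic checks out, since $\binom{h+1}{2}-h=\binom{h}{2}$, and your observation that only the non-embedding direction of Euler's inequality is needed (so the Klein-bottle $K_7$ exception is irrelevant) is exactly right.
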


This result is a corollary of a more general result of Andreae \cite{A}.

\begin{theorem}[\cite{A}]
Let $H$ be a graph with minimum degree at least $2$ and maximum degree $\Delta$. Then every graph that does not contain $H$ as a minor satisfies $$c(G)\le |E(H)| - \Delta.$$
\end{theorem}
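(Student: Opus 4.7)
The approach is to extend the Aigner--Fromme ``cop territory'' strategy from planar graphs to $H$-minor-free graphs, spending one cop per edge of $H$ except those incident to a fixed high-degree vertex. Fix $u_0 \in V(H)$ with $\deg_H(u_0) = \Delta$, so that $H - u_0$ has precisely $|E(H)| - \Delta$ edges; this will be our cop budget. The goal is to use these cops to construct, step by step, a model of $H - u_0$ inside $G$ in which the robber's territory will play the role of the branch set of $u_0$. Either the robber is trapped during the construction and caught, or the construction succeeds and yields an $H$-minor in $G$, contradicting the hypothesis.

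At each stage of the game we maintain the following \emph{partial model} invariant: there is a subset $U \subseteq V(H) \setminus \{u_0\}$ together with pairwise disjoint connected subgraphs $B_v \subseteq V(G)$ (for $v \in U$) and, for every edge $uv \in E(H[U])$, a path $P_{uv}$ joining $B_u$ to $B_v$ that is isometric in the robber's current effective subgraph. One cop is stationed on each such $P_{uv}$ and guards it using Lemma~\ref{ipl}, and the robber lives in a single connected region $R$ of $G$ disjoint from all $B_v$ and $P_{uv}$. Note that the assumption that $H$ has minimum degree at least $2$ forces every $v \in V(H) \setminus \{u_0\}$ to have a neighbor in $V(H) \setminus \{u_0\}$ as well, which is what will allow us to anchor each newly introduced branch set to the existing partial model rather than only to the future region of $u_0$.

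The main step is the inductive expansion: given a partial model with $U \ne V(H) \setminus \{u_0\}$, we pick $w \in V(H) \setminus (U \cup \{u_0\})$ that is $H$-adjacent to some vertex of $U$ (possible by the previous remark, once $U \ne \emptyset$) and introduce a new branch set $B_w$ together with guarded isometric paths $P_{wv}$ for every $v \in U$ with $wv \in E(H)$. To do this, we use a free cop to guard a shortest path from $R$ toward an existing $B_v$, push the robber into a smaller region by a standard IPL shrinking argument, and iterate until a connected $B_w$ emerges inside the old $R$ along with the required guarded connections. The \emph{main obstacle} is the bookkeeping: we must ensure that the number of simultaneously active cops never exceeds $|E(H)| - \Delta$, which requires committing cops in a carefully chosen order (for instance, following an edge ordering of $H - u_0$ induced by a spanning tree rooted at a vertex adjacent to $u_0$) and recycling cops whose temporary guards become obsolete once the newly added branch set absorbs the relevant paths.

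The process halts in one of two ways. Either the robber is captured during some expansion step, and we are done; or $U$ eventually exhausts $V(H) \setminus \{u_0\}$, in which case a complete model of $H - u_0$ sits inside $G$ and the robber's surviving region $R$ is still adjacent (through otherwise unused edges) to each of the $\Delta$ branch sets corresponding to the neighbors of $u_0$ in $H$. Taking a connected subgraph of $R$ containing the robber as the branch set for $u_0$, together with the evident attaching paths, produces an $H$-minor of $G$, contradicting the assumption. Therefore the former alternative must occur, and $|E(H)| - \Delta$ cops suffice to capture the robber.
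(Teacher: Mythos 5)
The survey does not reproduce Andreae's proof, so your plan has to be judged against the original argument in \cite{A}. Your high-level architecture is in fact the right one and is essentially Andreae's: fix a vertex $u_0$ of degree $\Delta$ (so that $H-u_0$ has $|E(H)|-\Delta$ edges and, by the minimum-degree-$2$ hypothesis, no isolated vertices), build a partial minor model of $H-u_0$ with one cop guarding an isometric path for each realized edge, and let the robber's territory serve as the branch set of $u_0$, so that total success of the construction would exhibit a forbidden $H$-minor. That is the correct skeleton, and your identification of where the minimum-degree hypothesis enters is also correct.

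However, the proposal stops exactly where the proof becomes hard, and the two points you wave at are genuine gaps, not bookkeeping. First, the cop count during an extension step is not established: to attach a new branch vertex $w$ with $d\ge 2$ neighbors in $U$ you must lay down $d$ new guarded paths one at a time, and while doing so you also need cops free to carry out the ``IPL shrinking'' that confines the robber; you assert that obsolete guards can be recycled, but you never exhibit an ordering in which the number of simultaneously committed cops stays at or below $|E(H)|-\Delta$, and this is precisely the content of Andreae's main lemma. Second, and more seriously, your terminal contradiction needs the invariant that the robber's surviving region $R$ remains adjacent to every branch set $B_v$ with $v\sim_H u_0$ (indeed the extension step itself needs $R$ to stay suitably attached to the existing partial model). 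But each time a newly guarded path splits the territory and the robber retreats into one component, that component need not meet the neighborhoods of all previously built $B_v$'s; nothing in your invariant or your expansion step restores this. Andreae handles it by carefully growing the branch sets (absorbing portions of the guarded paths) so that attachment to the shrinking territory is preserved, and without some such mechanism the final configuration need not be an $H$-minor, so no contradiction is obtained. As written, then, this is a plausible outline of the known proof with its two central difficulties left open, rather than a proof.
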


The bound of Theorem \ref{thm:nonorigenus} was improved upon in an unpublished work by Nowakowski and Schroeder.

\begin{theorem}[\cite{NS}]
If $G$ is a graph of non-orientable genus $g,$ then $$c(G)\leq 2g+1.$$
\end{theorem}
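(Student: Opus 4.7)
The plan is to proceed by induction on the non-orientable genus $g$, in the spirit of Quilliot's and Schroeder's expanding cop-territory arguments, but using a careful choice of non-contractible cycle adapted to the non-orientable setting. The base case is $g = 1$, where $G$ embeds in the projective plane; one verifies directly (by a variant of the Aigner--Fromme argument for Theorem~\ref{theorem:planar}, with a shortest non-contractible cycle playing the role of the unbounded face) that $c(G) \leq 3 = 2(1)+1$.

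For the inductive step, assume the bound holds for all graphs of non-orientable genus less than $g$, where $g \geq 2$, and let $G$ be embedded in the non-orientable surface $N_g$. Choose a shortest non-contractible cycle $C$ in $G$. Because $C$ is shortest, it is geodesic: picking any two vertices $u,v \in C$ which are diametrically opposite on $C$, each of the two $u$--$v$ arcs of $C$ is an isometric path in $G$. By Lemma~\ref{ipl}, a single cop suffices to guard each arc, so that two cops together guard $C$ in the sense that the robber is captured the instant they step onto $C$.

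With $C$ under permanent guard, cut the embedding along $C$. If $C$ is one-sided, this cutting eliminates one cross-cap and yields a surface of non-orientable genus $g-1$ with one new boundary circle. If $C$ is two-sided, cutting along $C$ produces a surface of strictly smaller total non-orientable genus (reducing by two if $C$ is non-separating, or splitting into pieces each of non-orientable genus less than $g$ if $C$ is separating). In every case, the component of $G \setminus C$ that contains the robber, together with $C$, embeds in a surface $\Sigma'$ of non-orientable genus at most $g-1$. The inductive hypothesis applied to this trapped subgraph yields $2(g-1)+1 = 2g-1$ further cops; together with the two cops guarding $C$, this establishes $c(G) \leq 2g+1$.

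The principal obstacle is making the cutting step formally precise within the graph. When $C$ is one-sided, its M\"obius-band neighborhood has only one topological side, so one must argue that a robber confined to the complement of a guarded $C$ is nonetheless trapped in a subgraph that genuinely embeds in a surface of strictly smaller non-orientable genus. The standard device is to duplicate $C$ and push the two copies off each combinatorial side, using the rotation system of the embedding to produce a graph embedded in the cut-open surface while preserving the guarding. Verifying this reduction carefully, so that the inductive hypothesis genuinely applies and the two guarding cops prevent the robber from re-entering the cleared region, is the main technical step; once it is in place, the cop-territory expansion proceeds routinely.
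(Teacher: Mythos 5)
The survey does not reproduce a proof of this theorem --- it only cites the unpublished note \cite{NS} --- so there is nothing in the paper to compare your argument against line by line; your skeleton (guard a shortest non-contractible cycle with two cops via Lemma~\ref{ipl}, cut along it, and induct on the number of cross-caps) is certainly the natural mechanism behind a bound of the form $2g+1$. The isometric-arc step is sound: the standard three-path argument shows that a shortest non-contractible cycle $C$ satisfies $d_C(x,y)=d_G(x,y)$ for all $x,y\in V(C)$, so splitting $C$ at two antipodal vertices gives two isometric paths, each guardable by one cop; and once $C$ is guarded the robber is confined to a single component $H$ of $G-V(C)$, so it suffices to bound $c(H)$. (You should also dispose of the degenerate case in which $G$ has no non-contractible cycle --- then $G$ is planar --- and drop ``together with $C$'': after cutting, the vertices of $C$ are doubled along the boundary, so $H\cup C$ need not embed in the cut surface, though this does not matter since only $c(H)$ is needed.)

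There are, however, two genuine gaps. First, the base case $g=1$ is itself a non-trivial theorem (projective-planar graphs are $3$-cop-win, the $g=1$ instance of Theorem~\ref{thm:Clarke nonori}), and it \emph{cannot} be obtained from your own recursion: cutting $N_1$ along $C$ leaves a disc, so your scheme would charge $2+3=5$ cops, not $3$. A genuine reworking of the Aigner--Fromme argument is required here and you only gesture at it. Second, and more seriously, the assertion that ``in every case'' the robber's component embeds in a surface of non-orientable genus at most $g-1$ is false as stated. Cutting $N_g$ along an essential cycle and capping can yield an \emph{orientable} surface (for example, $N_3\cong S_1\# N_1$ cut along the cross-cap curve leaves a torus, and a $2$-sided curve in the Klein bottle can have an annulus as complement), and a graph embedded in $S_h$ with $2-2h=3-g$ can have non-orientable genus as large as $2h+1=g$ (witness $K_7$, toroidal with non-orientable genus $3$). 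In that branch the inductive hypothesis simply does not apply to $H$, and the induction does not close. To repair this one must either run the induction on Euler genus with a statement covering both orientability types simultaneously, or fall back on the orientable-genus bounds (Theorem~\ref{s2}) in the degenerate branches and verify the arithmetic for each value of $g$ --- neither of which your write-up does.
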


This bound is tight for the projective plane ($g=1$), where the tightness is witnessed by the Petersen graph.

The best known upper bound in the non-orientable case was found by Clarke, Fiorini, Joret, and Theis \cite{clarke}. Define $c(g)$ to be the maximum of the cop numbers of graphs of genus $g$ and
$\widetilde{c}(g)$ to be the maximum of the cop numbers of graphs of non-orientable genus $g$.

\begin{theorem}[\cite{clarke}]\label{thm:Clarke nonori}
If $g$ is a positive integer, then $$c(\lfloor g/2 \rfloor) \le \widetilde{c}(g) \le c(g-1).$$ In particular, $$\widetilde{c}(g) \leq
\frac{3}{2}(g+1).$$
\end{theorem}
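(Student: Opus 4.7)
The plan is to prove the two inequalities separately by translating between orientable and non-orientable embeddings, and then to derive the numerical bound from Theorem~\ref{s2}. The upper bound uses the orientable double cover; the lower bound uses the fact that one crosscap can always be added to an orientable embedding.

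For the upper bound $\widetilde{c}(g) \le c(g-1)$, let $G$ be embedded in the non-orientable surface $N_g$. The orientable double cover of $N_g$ is $S_{g-1}$, since $\chi(S_{g-1}) = 4-2g = 2\chi(N_g)$. Pulling back the embedding of $G$ through the covering map $\pi$ produces a graph $\widetilde{G}$ embedded in $S_{g-1}$, so $\gamma(\widetilde{G}) \leq g-1$ and hence $c(\widetilde{G}) \le c(g-1)$. Fix an optimal winning strategy $\Sigma$ for $k = c(g-1)$ cops on $\widetilde{G}$. I would then run the game on $G$ with $k$ cops as follows. On the robber's initial move to $r_0 \in V(G)$, pick an arbitrary lift $\widetilde{r}_0 \in \pi^{-1}(r_0)$; whenever the real robber walks from $r_i$ to $r_{i+1}$, use the unique edge-lift property of the covering to define a corresponding step $\widetilde{r}_i \to \widetilde{r}_{i+1}$ in $\widetilde{G}$. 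The cops play $\Sigma$ against this phantom robber in $\widetilde{G}$, and their positions are projected to $G$ via $\pi$. Because $\pi$ is a graph homomorphism, the projected cop moves are legal in $G$; and when $\Sigma$ captures the phantom robber at some time $i$, the corresponding cop sits at $\pi(\widetilde{r}_i) = r_i$, capturing the real robber.

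For the lower bound $c(\lfloor g/2 \rfloor) \le \widetilde{c}(g)$, the key topological observation is that every graph $G$ with $\gamma(G) = h$ satisfies $\widetilde{\gamma}(G) \le 2h+1$: remove a small open disk from the interior of a face of an embedding in $S_h$ and glue a M\"obius band in its place, yielding an embedding in $S_h \mathbin{\#} N_1 = N_{2h+1}$. For $g$ odd, taking $h = \lfloor g/2 \rfloor = (g-1)/2$ gives $2h+1 = g$, so any $G$ with $\gamma(G) \le h$ embeds in $N_g$, and passing to the supremum yields $c(\lfloor g/2 \rfloor) \le \widetilde{c}(g)$. For $g$ even one instead has $2h+1 = g+1$, and the direct argument loses one crosscap; handling this case, which I view as the main obstacle, requires either a sharpening of the crosscap-insertion procedure (replacing one of the $h$ handles by a Klein bottle summand, so that the resulting surface is $S_{h-1} \mathbin{\#} N_2 = N_{2h}$, provided the extremal graph admits an embedding that uses one handle ``inessentially'') or a separate construction producing a high-cop-number graph directly embedded in $N_g$.

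Finally, for the numerical bound, chain the upper inequality with Theorem~\ref{s2}:
\[
\widetilde{c}(g) \;\le\; c(g-1) \;\le\; \left\lfloor \tfrac{3(g-1)}{2}\right\rfloor + 3 \;\le\; \tfrac{3(g-1)}{2} + 3 \;=\; \tfrac{3(g+1)}{2}.
\]
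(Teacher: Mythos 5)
The survey states this theorem without proof (it is quoted from \cite{clarke}), so your argument can only be measured against the cited source. Your upper bound is the standard argument and essentially the one in \cite{clarke}: lift the embedding to the orientable double cover $S_{g-1}$ of $N_g$, lift the robber's walk by unique path lifting, run a winning strategy for $c(g-1)$ cops upstairs, and project the cops down via the covering homomorphism. The one point you should add is the degenerate case in which $\pi^{-1}(G)$ is disconnected; then it consists of two disjoint copies of $G$, each embedded in $S_{g-1}$, and $c(G)\le c(g-1)$ is immediate (you also implicitly use that $c(\cdot)$ is nondecreasing, which is harmless). The derivation of $\widetilde c(g)\le \tfrac32(g+1)$ from Theorem~\ref{s2} is correct.

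The genuine gap is exactly where you located it: the lower bound for even $g$. Your fallback suggestions do not close it. The first one --- replacing a handle by a Klein-bottle summand ``provided the extremal graph admits an embedding that uses one handle inessentially'' --- rests on the hope that a graph of orientable genus $h$ embeds in $N_{2h}$, and this is false in general: $K_7$ has orientable genus $1$ but non-orientable genus $3$, so it does not embed in the Klein bottle $N_2$. Hence for even $g=2h$ one cannot in general transplant an extremal genus-$h$ graph into $N_g$, and the crosscap-insertion argument as written only establishes $c(\lfloor (g-1)/2\rfloor)\le\widetilde c(g)$, which coincides with the claimed inequality only when $g$ is odd. Closing the even case requires an additional idea beyond pure surface surgery --- one must produce, for every graph $G$ of genus $h$, some graph embeddable in $N_{2h}$ whose cop number is at least $c(G)$, since the graph $G$ itself may fail to embed there --- and your second suggested route (``a separate construction'') is not carried out. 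As it stands, half of the first inequality is unproven.
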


It is also conjectured in \cite{clarke} that the lower bound in Theorem \ref{thm:Clarke nonori} is always tight.

\bigskip

\noindent \textbf{Clarke-Fiorini-Joret-Theis conjecture}: {\it If $g$ is a positive integer, then $$\widetilde{c}(g) = c(\lfloor g/2 \rfloor).$$}

\section{Lower bounds}\label{seclow}

In all of the previous discussion, mainly upper bounds on $c(g)$ and $\widetilde{c}(g)$ were discussed. Not much is known about lower bounds. The notes of one of the authors \cite{MoharNotes} dating
back to 2008 may be the only source.

The simplest and essentially the only known way to prove a lower bound on the cop number is when a graph has girth at least 5. In that case, the minimum degree is a lower bound. By taking a graph of
genus at most $g$ and girth 5 whose minimum degree is as large as possible, we may derive a lower bound on $c(g)$.

Let $q$ be a prime power, and let $G_q$ be the bipartite incidence graph of the finite projective plane of order $q$. Then $n=|G_q|=2(q^2+q+1)$, the graph has girth 6, and it is $\delta$-regular,
where $\delta=q+1$. Note that the genus of any graph is always smaller than the number of edges; thus, the genus of $G_q$ is less than $g = (q^2+q+1)(q+1)$. This shows that
\begin{equation}
   c(g) \ge \Omega(g^{1/3}). \label{eq:cuberootlowerbound}
\end{equation}
The above bound was only argued for certain values of $g$, but since the primes are dense enough (for example, by the Bertrand-Chebyshev theorem there is a prime between any pair of integers $n$ and
$2n$), the lower bound (\ref{eq:cuberootlowerbound}) holds for every $g$.

By using Euler's formula, it is straightforward to see that a graph $G$ of girth 5 and minimum degree $\delta\ge7$ has genus at least
$$
    \tfrac{1}{12}(\delta-6)|G|.
$$
Since the number of vertices of such a graph is at least $\delta^2$, lower bounds on $c(g)$ using graphs of girth 5 and their minimum degree cannot go beyond the bound of $\Theta(g^{1/3})$.

However, there is a better lower bound, which uses random graphs (see \cite{BollobasKunLeader} or \cite{LuczakPralat}). We say that an event in a probability space holds \emph{asymptotically almost
surely} (\emph{a.a.s.}) if its probability tends to one as $n$ goes to infinity.

\begin{theorem}[Bollob\'as, Kun, and Leader \cite{BollobasKunLeader}]
\label{thm:random} If $p=p(n)\ge 2.1\log(n)/n$, then a.a.s.\ we have that
$$
   (np)^{-2} n^{1/2-o(1)} \le c(G_{n,p}) \le 160000 \sqrt{n} \log(n).
$$
\end{theorem}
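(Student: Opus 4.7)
The plan is to prove the two bounds separately by probabilistic arguments exploiting standard properties of $G_{n,p}$. For the upper bound, I would place $k = 160000\sqrt{n}\log n$ cops at the vertices of a uniformly random set $S$ chosen at the outset, and then run a ``chase'' strategy. The essential claim is a combinatorial property of the random pair $(G_{n,p},S)$: with high probability, for every vertex $v$ (a candidate robber position), the intersection $S \cap N^2[v]$ is large enough that the cops can saturate every neighbour of $v$ in a bounded number of rounds. To establish this I would combine a Chernoff bound on $|S\cap N(v)|$, valid for every $v$ by a union bound that uses $p \ge 2.1\log n/n$, with mixing-type estimates on the second-neighbourhood size in $G_{n,p}$. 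Putting the two ingredients together should yield a winning strategy in $O(\log n/\log(np))$ rounds.

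For the lower bound, I would exhibit an escape strategy for the robber that exploits the tree-like local geometry of sparse random graphs. A.a.s., small balls around a given vertex inherit essentially the geometry of a Galton--Watson tree with offspring mean $np$: the ball $B_2(v)$ has roughly $(np)^2$ vertices and only $o(1)$ extra edges on average. With fewer than $k = (np)^{-2} n^{1/2-o(1)}$ cops on the board, at every round the robber should be able to find a neighbour $v'$ whose second neighbourhood $B_2(v')$ is essentially cop-free, and so continue playing safely. The quantitative threshold $k \approx n^{1/2-o(1)}/(np)^2$ emerges from the pigeonhole requirement that a random-looking set of $k$ cops among $n$ vertices be forced to meet a fixed ball of size $(np)^2$, which happens exactly when $k \cdot (np)^2 \ge n^{1-o(1)}$.

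The main obstacle will be the upper bound. For $p$ close to the connectivity threshold the domination number of $G_{n,p}$ is a.a.s.\ much larger than $\sqrt{n}\log n$, so no static cop placement can possibly dominate, and one genuinely needs a dynamic strategy. The delicate point is to show that even though $S$ does not initially dominate $G_{n,p}$, the combination of expansion and small diameter lets the cops shrink the robber's free territory by a constant multiplicative factor per round. Managing the union bound over the potentially exponentially many game histories, while still fitting the capture into $O(\log n/\log(np))$ rounds, is the principal technical hurdle, and is where I would expect to spend most of the effort.
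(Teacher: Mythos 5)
This statement is quoted in the survey as a known result of Bollob\'as, Kun, and Leader; the paper gives no proof of it, so your proposal can only be measured against the argument in \cite{BollobasKunLeader} itself, and against that standard both halves of your sketch have genuine gaps. For the upper bound, your ``essential claim'' --- that a uniformly random set $S$ of $160000\sqrt{n}\log n$ vertices a.a.s.\ meets $N^2[v]$ substantially for every $v$ --- is simply false in the regime that matters. With $d=np$, one has $\mathbb{E}\,|S\cap N^2[v]|\approx d^2|S|/n=160000\,d^2\log n/\sqrt{n}$, which tends to $0$ whenever $d=o(n^{1/4}/\sqrt{\log n})$; in particular at the connectivity threshold $d=\Theta(\log n)$ the set $S$ misses the second neighbourhood of almost every vertex. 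You flag this yourself in your final paragraph, but the fix you gesture at (``shrink the robber's free territory by a constant multiplicative factor per round'') is not how the argument can go: the robber's reachable set \emph{grows} by a factor of roughly $d$ each round, and the actual strategy works in phases of length about $\log n/\log d$ in which a fresh team of cops is dispatched to dominate a ball of radius $r$ (with $d^r\approx\sqrt{n}$, costing about $\sqrt{n}\log n$ cops) around the robber's anticipated position before the robber can escape it. That balancing of cop travel time against neighbourhood growth is the entire content of the proof, and it is absent from your sketch.

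The lower bound is in worse shape, because your heuristic produces the wrong exponent. You derive the threshold from the requirement that ``a random-looking set of $k$ cops among $n$ vertices be forced to meet a fixed ball of size $(np)^2$,'' which happens when $k(np)^2\ge n^{1-o(1)}$; read as a survival criterion this would give $c(G_{n,p})\ge n^{1-o(1)}/(np)^2$, which for $np=\mathrm{polylog}(n)$ exceeds the theorem's own upper bound of $160000\sqrt{n}\log n$ --- so the argument cannot be correct. The error is that the cops are adversarial and adaptive, not a uniformly random set: they concentrate where they predict the robber will be, and the drop from $n^{1-o(1)}$ to $n^{1/2-o(1)}$ is precisely the price of that adaptivity. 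The genuine proof has the robber randomize among safe moves and shows, via a counting/anticipation argument over the cops' possible deployments across two time scales, that $k$ cops cannot cover enough of the robber's likely future positions unless $k\ge n^{1/2-o(1)}/(np)^2$. Local tree-likeness of $G_{n,p}$ and a pigeonhole count against a fixed ball do not get you there.
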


\begin{theorem}[\cite{MoharNotes}] \label{mn}
For every $\varepsilon>0$ and every sufficiently large $g$, there is a graph $G$ of (nonorientable) genus $g$ whose cop number is bounded as
$$g^{\frac{1}{2}-\varepsilon} \le c(G) \le g^{\frac{1}{2}+\varepsilon}.$$
In other words,
$$c(g)\geq \widetilde c(G) \ge g^{\frac{1}{2}-o(1)}.$$
\end{theorem}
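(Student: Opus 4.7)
The plan is to apply the Bollob\'as--Kun--Leader bounds (Theorem~\ref{thm:random}) to a suitable Erd\H{o}s--R\'enyi random graph and then translate the estimates from the number of vertices $n$ into estimates in terms of the genus. Given $\varepsilon>0$, fix a small parameter $\beta = \beta(\varepsilon) > 0$ (the calculation below shows $\beta = 2\varepsilon/5$ is enough), set $p = n^{\beta - 1}$, and consider $G = G_{n,p}$ for large $n$.

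A standard Chernoff estimate on a binomial random variable gives a.a.s.\ $|E(G)| = (1+o(1))\,\frac{1}{2}\,n^{1+\beta}$. I would then read off the genus from this edge count. The upper bound $\gamma(G) \le |E(G)|$ is trivial; for the lower bound, Euler's formula combined with $|F| \le 2|E|/3$ (every face of a minimum-genus embedding has at least three sides) yields $\gamma(G) \ge |E|/6 - |V|/2 + 1$. Since $|E| = \Theta(n^{1+\beta})$ dominates $|V| = n$, this forces $\gamma(G) = \Theta(n^{1+\beta})$ a.a.s., and an identical computation (with $1/6$ replaced by $1/3$) gives the same order for the non-orientable genus $\widetilde\gamma(G)$. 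At the same time, since $p \gg 2.1 \log n/n$, Theorem~\ref{thm:random} applies and produces a.a.s.
$$ n^{1/2 - 2\beta - o(1)} \le c(G) \le n^{1/2 + o(1)}, $$
where the left-hand side uses $(np)^{-2} = n^{-2\beta}$.

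Setting $g = \gamma(G)$ (or $\widetilde\gamma(G)$) gives $n = \Theta(g^{1/(1+\beta)})$, so the cop-number bounds translate to $g^{(1/2 - 2\beta - o(1))/(1+\beta)} \le c(G) \le g^{(1/2+o(1))/(1+\beta)}$. A short exponent comparison shows that for $\beta$ small compared with $\varepsilon$ and $n$ (hence $g$) sufficiently large, these exponents are sandwiched in $[1/2 - \varepsilon,\, 1/2 + \varepsilon]$, which yields the conclusion on $c(G)$ and, via Theorem~\ref{thm:Clarke nonori}, the stated asymptotic bounds on both $c(g)$ and $\widetilde c(g)$.

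I expect two technical obstacles. First, the random construction produces a graph whose genus is only proportional to, not equal to, the prescribed value $g$. To obtain genus exactly $g$, one may attach a suitable number of disjoint copies of $K_5$ at a fixed vertex of $G_{n,p}$: by the $1$-sum additivity of genus each copy raises $\gamma$ by exactly one, and since $G_{n,p}$ is a retract of the augmented graph, the cop number stays within an additive constant of $c(G_{n,p})$, which is negligible for the asymptotics. Second, one must track the $o(1)$ terms carefully through the change of variable $n \mapsto g^{1/(1+\beta)}$ so that after dividing by $1+\beta$ they remain inside the $\varepsilon$-window; this is where the precise relationship between $\beta$ and $\varepsilon$ gets pinned down, but the computation is routine, and the main content of the proof lives in Theorem~\ref{thm:random} combined with the Euler-formula estimate for $\gamma(G_{n,p})$.
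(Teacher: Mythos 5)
Your proposal is correct and follows essentially the same route as the paper: apply Theorem~\ref{thm:random} to $G_{n,p}$ above the connectivity threshold, bound the genus from above by the edge count and from below via Euler's formula, and translate the cop-number bounds from $n$ to $g$. The only substantive difference is the choice of $p$ --- the paper takes $p=\tfrac{5}{2}\log(n)/n$ so that the $(np)^{-2}$ loss is merely polylogarithmic and gets absorbed into the $o(1)$ without any $\beta$-versus-$\varepsilon$ bookkeeping, whereas your $p=n^{\beta-1}$ works but forces the explicit exponent comparison --- and your $K_5$-attachment step to hit the genus exactly addresses a point the paper's proof glosses over entirely.
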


\begin{proof}
Let $p=p(n)=\tfrac{5}{2}\log(n)/n$. The random graph a.a.s.\ $G=G(n,p)$ has less than $2n\log n$ edges; thus, its genus $g$ is smaller than $2n\log n$. By Theorem~\ref{thm:random}, its cop number is
a.a.s.\ at least
$$c(G) \ge (np)^{-2} n^{1/2-o(1)} \ge g^{\frac{1}{2}-\varepsilon}.$$

The upper bound holds for the same graph. The proof uses the same two results (just the opposite bounds), where the bigger constants and the $\log(n)$ factors can be hidden in $g^{\varepsilon}$ when $g$ is large enough.
\end{proof}

It is also suggested that random graphs indeed provide the worst behaviour and the following conjecture is proposed in \cite{MoharNotes}.

\bigskip

\noindent
\textbf{Mohar's conjecture}: {\it $c(g)= g^{\frac{1}{2}+o(1)}$\ and \ $\widetilde c(g)= g^{\frac{1}{2}+o(1)}$.
In other words, for every $\varepsilon >0$ there exists $g_0$ such that for every $g\ge g_0$,
$$g^{\frac{1}{2}-\varepsilon} < c(g) < g^{\frac{1}{2}+\varepsilon}\quad \textrm{and} \quad
g^{\frac{1}{2}-\varepsilon} < \widetilde c(g) < g^{\frac{1}{2}+\varepsilon}.$$}

\section{Miscellaneous topics}\label{secm}

We next consider a number of topics related to Cops and Robbers games and their variants played on surfaces. We focus on the lazy cop number, Zombies and Survivors, and the cop number of directed
planar graphs.

\subsection{Lazy Cops and Robbers}

There are many variants of the game of Cops and Robbers, where the cops and robbers may have some advantage or disadvantage in gameplay (see \cite{bonato} for some of these). In the game of
\emph{Lazy Cops and Robbers}, the rules are analogous to the classic game with the exception that only one cop may move at a time. Hence, Lazy Cops and Robbers is a game more akin to chess or
checkers. The corresponding parameter to the cop number is the \emph{lazy cop number}, written $c_L(G).$ Clearly, $c_L(G) \ge c(G)$ and the bound is not tight in general. This game and parameter was
first considered by Offner and Ojakian~\cite{oo}, who included bounds on the lazy cop number of hypercubes. These bounds were sharpened in \cite{lazy1} using the probabilistic method.

The following bound given in \cite{lazy2} gives asymptotic upper bound on $c_L$ for graphs of genus $g$.  It exploits the Gilbert, Hutchinson, and Tarjan separator theorem \cite{GHT}.

\begin{theorem}\label{thm:bded_genus}
For every $n$-vertex graph $G$ of genus $g$ we have $$c_L(G) \le 60\sqrt{gn} + 20\sqrt{2n}= O(\sqrt{gn}).$$
\end{theorem}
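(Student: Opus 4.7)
The plan is a recursive divide-and-conquer argument built on the Gilbert--Hutchinson--Tarjan (GHT) separator theorem from~\cite{GHT}, which asserts that every $n$-vertex graph of genus $g$ has a vertex set $S$ of size at most $6\sqrt{gn} + 2\sqrt{2n}$ whose removal breaks the graph into components of size at most $\tfrac{2}{3}n$. The strategy uses repeated applications of GHT to trap the robber in a region that shrinks by a factor of $2/3$ at each stage, and crucially relies on the fact that cops posted on a separator need never move again.

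First I place a batch of cops on a GHT separator $S_0$ of $G$. These cops will serve as stationary wardens for the remainder of the game, so the lazy restriction is of no consequence to them. When the robber appears, it must lie in some component $C_0$ of $G - S_0$ of order at most $\tfrac{2}{3}n$; any attempt to cross $S_0$ incurs immediate capture. Since $G[C_0]$ is a subgraph of $G$ its genus is at most $g$, so GHT yields a separator $S_1$ of $G[C_0]$ of size at most $6\sqrt{g \cdot \tfrac{2}{3}n} + 2\sqrt{2 \cdot \tfrac{2}{3}n}$. A fresh batch of cops is now shuttled to $S_1$ one at a time under the lazy rule. This may require many rounds, but the robber remains confined to $C_0$ throughout because the level-$0$ wardens never leave $S_0$. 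Iterating, after each stage the robber's territory shrinks by a factor of $2/3$, so after $O(\log n)$ stages the territory is a single vertex and capture is assured.

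Summing the separator sizes across all stages yields the total cop budget
$$c_L(G) \;\le\; \sum_{i \ge 0}\Bigl(6\sqrt{g\,(2/3)^i n} + 2\sqrt{2\,(2/3)^i n}\Bigr) \;=\; \frac{6\sqrt{gn} + 2\sqrt{2n}}{1 - \sqrt{2/3}}.$$
Since $1/(1 - \sqrt{2/3}) < 6$, the right-hand side is at most $36\sqrt{gn} + 12\sqrt{2n}$, which sits comfortably within the claimed $60\sqrt{gn} + 20\sqrt{2n}$, leaving ample slack to absorb any finite-size corrections.

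The main obstacle, which is really a point to verify rather than a substantive difficulty, is confirming that laziness never affords the robber an escape route. This follows because every cop, once posted on a separator, stays there forever; the lazy rule merely delays when each subsequent separator becomes fully manned, lengthening the game but never breaching the prison walls. A minor loose end is the termination of the recursion once the component has bounded size, which is handled by either continuing the recursion down to trivial components or absorbing a constant number of clean-up cops into the asymptotics.
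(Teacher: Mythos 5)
Your recursive Gilbert--Hutchinson--Tarjan separator argument is correct and is essentially the approach the survey indicates (the theorem is quoted from \cite{lazy2} with the remark that it exploits the GHT separator theorem): stationary cops on nested separators confine the robber to components shrinking by a factor of $2/3$, laziness only delays deployment, and the geometric sum $\bigl(6\sqrt{gn}+2\sqrt{2n}\bigr)/\bigl(1-\sqrt{2/3}\bigr)$ lands within the stated constants. No gaps.
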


It is not known whether the bound in Theorem~\ref{thm:bded_genus} is asymptotically tight, even in the case of planar graphs.  Further, and most interestingly, we are not presently aware of any
families of planar graphs on which the lazy cop number grows as an unbounded function. Recent work of Gao and Yang \cite{boting} gives a non-trivial example of a planar graph $G$ such that $c_L(G) \ge
4$ (in contrast to the upper bound of 3 given Theorem~\ref{theorem:planar}). Note that this answers a question posed in \cite{su}, where it was shown that the graph $K_3 \,\square\, K_3$ (that is,
the Cartesian product of $K_3$ with itself) is the
graph of smallest possible order having lazy cop number 3.

\subsection{Zombies and Survivors}

In the game of \emph{Zombies and Survivors}, suppose that $k$ \emph{zombies} (analogous to the cops) start the game on random vertices of $G$; each zombie, independently, selects a vertex uniformly
at random to begin the game. Then the \emph{survivor} (analogous to the robber) occupies some vertex of $G$. As zombies have limited intelligence, in each round, a given zombie moves toward the
survivor along a shortest path connecting them. If there is more than one neighbor of a given zombie that is closer to the survivor than the zombie's current position, then they move to one of these vertices chosen
uniformly at random. Each zombie moves independently of all other zombies. As in Cops and Robbers, the survivor may move to another neighboring vertex, or pass. The zombies win if one or more of them
\emph{eat} the survivor; that is, land on the vertex currently occupied by the survivor. The survivor, as survivors should do in the event of a zombie attack, attempts to survive by applying a
strategy that minimizes the probability of being eaten. Note that there is no strategy for the zombies; they merely move on geodesics toward the survivor in each round. In this sense, Zombies and
Survivors is a one-person game.

The probabilistic version of Zombies and Survivors was first introduced in \cite{zombies-bmpp}. The random zombie model was inspired by a deterministic version of this game (with similar rules, but
the zombies may choose their initial positions, and also choose which shortest path to the survivor they will move on) first considered in~\cite{zombies-hm}.

Let $s_k(G)$ be the probability that the survivor wins the game, provided that they follow the optimal strategy. Note that $s_k(G)$ is a nondecreasing function of $k$; that is, for every $k \ge 1$,
we have that $s_{k+1}(G) \le s_k(G)$, and $s_k(G) \to 0$ as $k\to \infty$. The latter limit follows since the probability that each vertex is initially occupied by at least one zombie tends to 1 as
$k \to \infty$. Define the \emph{zombie number} of a graph $G$ by
$$
z(G) = \min \{ k : s_k(G) \le 1/2 \} .
$$
Note that the zombie number is well-defined as $s_0(G)=1$, $s_k(G)$ is decreasing in $k$ and $$\lim_{k\to\infty}s_k(G)=0.$$ In particular, $z(G)$ is the smallest number of zombies such that the
probability that they eat the survivor is at least 1/2. Note that $z(G)\ge c(G)$.

In \cite{zombies-bmpp}, the zombie number was computed asymptotically for cycles, hypercubes, and incidence graphs of projective planes. For the Cartesian grid, two zombies are sufficient to eat the
survivor (and the cop number of this graph is 2). However, for grids spanning the surface of a torus, a much more complex situation arises.  Let $T_n$ be the \emph{toroidal $n\times n$ grid}, which
is isomorphic to $C_n \,\square\, C_n$ (that is, a 4-regular quadrangulation of the torus). The following lower bound for the zombie number of $T_n$ was proved in \cite{zombies-bmpp}.

\begin{theorem}[\cite{zombies-bmpp}]\label{chz:thm:torus}
If $\omega = \omega(n)$ is a function tending to infinity as $n\to \infty$, then a.a.s.\ $z(T_n) \ge \sqrt n/(\omega(n)\log n)$.
\end{theorem}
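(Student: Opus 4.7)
The plan is to exhibit an explicit survivor strategy that evades $k$ zombies with probability $1-o(1)$ whenever $k \le \sqrt{n}/(\omega(n)\log n)$. Since $s_k(T_n)$ is monotone decreasing in $k$, this forces $z(T_n) > k$ and gives the desired lower bound. The underlying intuition is that each zombie, while moving on a shortest path to the survivor, must make a random binary choice at every step in which both of its coordinates are unaligned with the survivor's, and the aggregate of these choices behaves like a biased random walk whose fluctuation perpendicular to the drift is only $O(\sqrt{n}\log n)$. With so few zombies there is therefore ample angular room on the torus for the survivor to escape.

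First I would identify $T_n$ with $\mathbb{Z}_n \times \mathbb{Z}_n$, place the survivor initially at the origin, and, for a zombie starting at $(a,b)$ with $|a|,|b|\le n/2$, describe its trajectory as a sequence of $|a|+|b|$ coin flips that decide, at each step, whether the zombie moves horizontally or vertically toward the origin. An Azuma or Chernoff bound then shows that, with probability at least $1-n^{-3}$, the trajectory stays within an $O(\sqrt{n}\log n)$ tube around its straight-line drift; in particular, when the zombie first reaches the shell $\{v : d(v,0)=R\}$ for a chosen $R=\Theta(n)$, its crossing point lies in a window of at most $O(\sqrt{n}\log n)$ vertices on that shell of size $\Theta(n)$.

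Next I would describe the survivor's strategy: wait near the origin while monitoring the zombies, and at a suitable moment identify a safe angular sector on the shell of radius $R$ that no zombie's projected tube intersects. Since the $k$ zombies collectively cover at most $O(k\sqrt{n}\log n)=O(n/\omega(n))=o(n)$ of the shell while the shell has $\Theta(n)$ vertices, a safe sector exists a.a.s. The survivor then commits to a geodesic escape through this sector and walks along it at speed $1$. I would then verify that once the survivor has built a lead of order $R$, every zombie must either continue along its original drift, in which case the tube bound keeps it outside the survivor's escape corridor, or reroute, which costs it at least $\Omega(R)$ extra steps; hence the survivor maintains the lead indefinitely, and combined with the observation that after the escape the game is essentially the same problem with a translated origin, one can iterate the argument.

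The main obstacle I anticipate is the toroidal periodicity: because the survivor cannot escape to infinity, one must argue carefully that after traversing a safe geodesic of length $R$ the survivor again finds a safe direction to switch into, and moreover that the randomness of the zombies refreshes sufficiently at each such switch. The key is that a zombie's random choice depends only on its current position and the survivor's current position, not on the past, so conditional on the history the remaining choices are still independent fair coin flips with respect to the new survivor position, letting the angular concentration bound be re-applied at each iteration. A secondary technical annoyance is handling the boundary of the $L^1$-sphere, where one coordinate of a zombie has already been used up and the tube analysis degenerates; here one argues directly that such zombies arrive from a predictable cardinal direction and can be dodged by a simple parity adjustment.
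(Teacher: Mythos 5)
You have correctly located the source of the $\sqrt n/(\omega\log n)$ threshold: a zombie's split between horizontal and vertical moves is a sequence of fair coin flips, so Azuma/Chernoff confines its arrival point on a fixed row or column to a window of width $\tilde O(\sqrt n)$, and $k$ such windows cover only $O(k\sqrt n\log n)=O(n/\omega)=o(n)$ of a cycle of length $n$. That is indeed the mechanism behind the bound in \cite{zombies-bmpp}. But the dynamic part of your argument has a genuine gap, in two places. First, the ``lead'' mechanism is false. In this game the distance from any zombie to the survivor is monotone non-increasing: each round the zombie moves along a shortest path to the survivor's \emph{current} position, decreasing the distance by exactly $1$, and the survivor can restore at most $1$. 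So the survivor can never ``build a lead of order $R$,'' and rerouting costs a zombie nothing --- there is no committed trajectory from which it deviates, because it re-aims every step. For the same reason your tube bound, which is computed for a drift toward a fixed origin, does not control a zombie once the survivor starts moving; the relative-position process must be analyzed with the survivor's motion built in (this is precisely what makes zombies ``ahead'' of a running survivor close at speed $2$ per round while those ``behind'' trail at constant distance), and a spatial gap on a shell is not the right object anyway --- a zombie that crosses the shell far from the survivor's exit point keeps homing in afterwards, so what is needed is space-time avoidance, not a spatial sector.

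Second, the iteration does not close. Each escape phase fails with some probability that is small but bounded below (each zombie has a fixed chance of an atypical sequence of coin flips during that phase), and the game lasts forever, so over infinitely many phases the failure probabilities are not summable and the survival probability of a strategy that merely ``repeats the argument with a translated origin'' is $0$, not at least $1/2$. Moreover the configuration after a phase is not ``essentially the same'': all distances can only have shrunk. To make an argument of this type work one must steer the game into an absorbing configuration from which the survivor wins deterministically --- in \cite{zombies-bmpp} the survivor travels persistently around a cycle of the torus and the concentration estimates are used to show that a.a.s.\ every zombie ends up trailing it on that cycle at positive distance, after which the survivor wins by running forever with no further appeal to randomness. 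Without such an endgame, or without per-phase failure probabilities that decay fast enough to be summable, your proposal does not yield $s_k(T_n)>1/2$.
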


We note that $z(T_n) = O(n^2 \log n)$. To see this, suppose that the game is played against $k = 3 n^2 \log n$ zombies. It is straightforward to see that a.a.s.\ every vertex is initially occupied by
at least one zombie and if so, the survivor is eaten immediately. Indeed, the probability that at least one vertex is not occupied by a zombie is at most $n^2 (1-1/n^2)^k \le n^2 \exp(-k/n^2) = 1/n =
o(1)$. However, no quadratic bounds in $n$ are known for the zombie number of toroidal grids.

\subsection{Directed graphs}

Cops and Robbers is played in directed graphs in an analogous way as in the undirected case, with the exception that we must move in the orientation of directed edges. Note that a robber wins if they
occupy a source. Hence, we consider the cop number of strongly connected directed graphs.

Challenges emerge quickly when considering the cop number of directed graphs. For example, while cop-win graphs are precisely the dismantlable ones \cite{nw,q}, there is no known characterization of
cop-win directed graphs. Further, the directed analogue of the Isometric Path Lemma is not applicable, as the cop may not be able to simply move back and forth on the path.

In \cite{fkl}, it was shown using the probabilistic method that if $G$ is a strongly connected directed graph, then
$$c(G) = O\left( \frac{n(\log \log n)^2}{\log n} \right).$$
For planar graphs, this bound was sharpened in \cite{LO} where it was proved that $c(G) = O(\sqrt{n})$ if $G$ is strongly connected and planar. The same result holds for digraphs of bounded genus.
For directed planar graphs, the cop number is not well understood. In \cite{LO}, an example of a strongly connected planar directed graph $G$ was given with $c(G) \ge 4$ (contrasting with
Theorem~\ref{theorem:planar}). We do not know if the cop number of strongly connected planar directed graphs is bounded by a constant. Note that the game on Eulerian digraphs generalizes the game
played on undirected graphs, since the game on any undirected graph $G$ is equivalent to the game on the Eulerian digraph obtained from $G$ by replacing each edge with a pair of oppositely oriented
arcs joining the same pair of vertices.

Recent work by Hosseini and Mohar~\cite{HM} considers the cop number of several ``nice'' orientations of toroidal grids. More precisely, they consider 4-regular quadrangulations of the torus and the Klein bottle,
subject to certain constraints about orientations of the edges. One such orientation of the toroidal grid is where each row and column are oriented either all left or right or all up and down. In all
cases they investigate, the cop number of such directed graphs is either 3 or 4. In the forthcoming paper \cite{maza}, it is shown that the cop number of arbitrary ``straight-ahead orientations'' of
4-regular quadrangulations is bounded above by 404.

\section{Summary of conjectures and open problems}

For the utility of readers, we gather together all the major unresolved questions presented in the survey. We include citations where relevant.

\begin{enumerate}

\item Classify the planar graphs with cop number $i$, for $1\le i \le 3$. We also pose the analogous problem for outerplanar graphs with $i=1,2.$

\item \textbf{Conjecture} \cite{bevbon}: The smallest order of a $3$-cop-win planar graph is $20$ and the corresponding graph is the dodecahedron.

\item Determine a tight bound on the capture time of planar graphs with cop number 2 and 3.

\item \textbf{Schroeder's conjecture} \cite{schr}: If $G$ is a graph of genus $g$, then $c(G)\leq  g + 3.$

\item \textbf{Andreae-Schroeder conjecture} \cite{A,schr}: Every toroidal graph has cop number at most 3.

\item \textbf{Mohar's conjecture}: For every $\varepsilon >0$ there exists $g_0$ such that for every $g\ge g_0$,
$$g^{\frac{1}{2}-\varepsilon} < c(g) < g^{\frac{1}{2}+\varepsilon}\quad \textrm{and} \quad
g^{\frac{1}{2}-\varepsilon} < \widetilde c(g) < g^{\frac{1}{2}+\varepsilon}.$$

\item \textbf{Clarke-Fiorini-Joret-Theis conjecture} \cite{clarke}: If $g$ is a positive integer, then $$\widetilde{c}(g) = c(\lfloor g/2 \rfloor).$$

\item Is the lazy cop number bounded by a constant on the class of all planar graphs? See \cite{lazy2}.

\item Determine the zombie number of the toroidal grid. Note that it is open to find either the optimal upper or lower bound. An easier problem may be to derive a quadratic upper bound on the
    zombie number. See \cite{zombies-bmpp}.

\item Determine if there are Eulerian planar directed graphs with arbitrarily large cop number. See \cite{HM,LO}.

\end{enumerate}

\end{document}